\theoremstyle{definition}
\newtheorem{thm}{Theorem}[section]
\newtheorem{lem}[thm]{Lemma}
\newtheorem{dfn}[thm]{Definition}
\newtheorem{cor}[thm]{Corollary}
\newtheorem{ntn}[thm]{Notation}
\newtheorem{exa}[thm]{Example}
\numberwithin{equation}{section}
\newcounter{TmpEnumi}
\newcommand{\af}{\alpha}
\newcommand{\gm}{\gamma}
\newcommand{\dt}{\delta}
\newcommand{\ep}{\varepsilon}
\newcommand{\te}{\theta}
\newcommand{\ld}{\lambda}
\newcommand{\sm}{\sigma}
\newcommand{\ph}{\varphi}
\newcommand{\rh}{\rho}
\newcommand{\ta}{\tau}
\newcommand{\Z}{{\mathbb{Z}}}
\newcommand{\R}{{\mathbb{R}}}
\newcommand{\C}{{\mathbb{C}}}
\newcommand{\N}{{\mathbb{Z}}_{> 0}}
\newcommand{\dist}{{\operatorname{dist}}}
\newcommand{\Aut}{{\operatorname{Aut}}}
\newcommand{\andeqn}{\,\,\,\,\,\, {\mbox{and}} \,\,\,\,\,\,}
\newcommand{\norm}[1]{\left\| #1 \right\|}
\newcommand{\bnorm}[1]{\big\| #1 \big\|}
\newcommand{\set}[1]{\left\{ #1 \right\}}
\newcommand{\bset}[1]{\big\{ #1 \big\}}
\newcommand{\cuntz}{\precsim}
\newcommand{\JS}{{\mathcal{Z}}}
\newcommand{\ca}{C*-algebra}
\newcommand{\uca}{unital \ca}
\newcommand{\alignInd}{\hspace*{3em} }
\newcommand{\hme}{homeomorphism}
\newcommand{\mh}{minimal homeomorphism}
\newcommand{\cms}{compact metric space}
\title{Centrally Large Subalgebras and Tracial
 ${\mathcal{Z}}$-Absorption}
\author{Dawn Archey, Julian Buck, and N.~Christopher Phillips}
\date{5~August 2016}
\address{Department of Mathematics and Software Engineering,
University of Detroit Mercy,
4001 West McNichols Road,
Detroit MI 48221-3038,
USA}
\address{Department of Mathematics, Francis Marion University,
 PO Box 100547, Florence SC 29502, USA.}
\address{Department of Mathematics, University  of Oregon,
       Eugene OR 97403-1222, USA.}
\subjclass[2010]{Primary 46L05, 46L55;
 Secondary 46L35.}
\thanks{This material is partially based upon work
  of the third author supported by the
  US National Science Foundation under
  Grants DMS-0701076, DMS-1101742, and DMS-1501144.
  Some of the work was done during a visit by the third author to
  the Institut Mittag-Leffler,
  and he is grateful to the Institut Mittag-Leffler
  for its hospitality and support.}
\begin{document}

\begin{abstract}
Let $A$ be a simple infinite dimensional
stably finite unital C*-algebra,
and let $B$ be a centrally large subalgebra of~$A$.
We prove that if $A$ is tracially ${\mathcal{Z}}$-absorbing
if and only if $B$ is tracially ${\mathcal{Z}}$-absorbing.
If $A$ and $B$ are also separable and nuclear,
we prove that $A$ is ${\mathcal{Z}}$-absorbing
if and only if $B$ is ${\mathcal{Z}}$-absorbing.
\end{abstract}

\maketitle

Let ${\mathcal{Z}}$ be the Jiang-Su algebra.
In this paper we prove that if $A$ is a simple infinite dimensional
stably finite unital C*-algebra and $B \subset A$ is a
centrally large subalgebra in the sense of~\cite{ArPh},
then $A$ is tracially ${\mathcal{Z}}$-absorbing
in the sense of~\cite{HirOr2013}
if and only if $B$ is tracially ${\mathcal{Z}}$-absorbing.
If, in addition, $A$ and $B$ are separable and nuclear,
then ${\mathcal{Z}} \otimes A \cong A$
if and only if ${\mathcal{Z}} \otimes B \cong B$.
(The actual hypotheses are slightly weaker;
see Theorem~\ref{tZabsGoesUp},
Theorem~\ref{T_5715_ZabsGoesUp},
Theorem~\ref{T_6208_ZDown},
and Corollary~\ref{C_6208_ZabsGoesDown}.)

Applications will appear elsewhere.
The main ones so far are as follows.
First,
let $X$ be an infinite \cms,
and let $h \colon X \to X$ be a \mh{}
with mean dimension zero.
Then $C^* (\Z, X, h)$ is $\JS$-stable.
This is proved by Elliott and Niu in~\cite{EN2},
and Theorem~\ref{T_5715_ZabsGoesUp} plays a key role.
Second,
Theorem~\ref{T_5715_ZabsGoesUp}
has been used in~\cite{ArBkPh} to prove
$\JS$-stability of crossed products
$C^* \big( \Z, \, C (X, D), \, \af \big)$
when $D$ is simple, unital, and nuclear,
the automorphism $\af \in \Aut (C (X, D))$
``lies over'' a \mh{} of~$X$
(in interesting cases, with large mean dimension),
and $\JS$-stability of the crossed product
comes from~$D$ rather than from the action of $\Z$ on~$X$.
Third,
David Kerr has proved (\cite{Ke})
that if $G$ is a countable infinite amenable group,
then there is a free minimal action of $G$ on
the Cantor set $X$ with a system of Rokhlin towers which is
good enough to construct an AF subalgebra of $C^{*} (G, X)$
that is centrally large in the sense of
Definition~\ref{LargeSubalg}.
It follows from Theorem~\ref{T_5715_ZabsGoesUp}
that $C^{*} (G, X)$ is $\JS$-stable.
We also mention Wei Sun's
``generalized higher dimensional noncommutative tori''~\cite{Su}.
Some of these are isomorphic to centrally large subalgebras
of crossed products by rotation actions
of $\Z$ on $(S^1)^n$.
Corollary~\ref{C_6208_ZabsGoesDown} implies that these
algebras are $\JS$-stable,
and from this fact one can deduce stable rank one
and sometimes deduce real rank zero.
See Example~\ref{E_6804_WeiSunEx}.

Large subalgebras were introduced in~\cite{PhLarge}.
They are an abstraction of an idea whose initial form
appeared in~\cite{Pt1},
and one of the main examples,
the orbit breaking subalgebra of a crossed product
by a \mh{}
(see Theorem~7.10 of~\cite{PhLarge})
is a generalization of the construction of~\cite{Pt1}.
It was shown in~\cite{PhLarge}
that if $B$ is stably large in~$A$,
then $A$ and $B$ have many properties in common.
For example, they have the same tracial states
(Theorem~6.2 of~\cite{PhLarge}),
the same quasitraces
(Proposition~6.9 of~\cite{PhLarge}),
the same purely positive part of the Cuntz semigroup
(Theorem~6.8 of~\cite{PhLarge}),
and the same radius of comparison
(Theorem~6.14 of~\cite{PhLarge}).
Centrally large subalgebras,
introduced in~\cite{ArPh}, satisfy an extra condition;
the appropriate orbit breaking subalgebras of crossed products
by \mh{s}
are centrally large
(Theorem~7.10 of~\cite{PhLarge}
and Theorem~4.6 of~\cite{ArPh}).
If $B$ is centrally large in~$A$
and $B$ has stable rank one,
then $A$ has stable rank one
(Theorem~6.3 of~\cite{ArPh}),
and if in addition $B$ has real rank zero,
then the same is true of~$A$
(Theorem~6.3 of~\cite{ArPh}).
This paper extends the results above
by proving that tracial $\JS$-stability
and, in the separable nuclear case, $\JS$-stability,
pass between a stably centrally large subalgebra
and the containing algebra.
We do not know whether it suffices to
consider just a large subalgebra.

In Section~\ref{Sec_AppCm}
we recall the definitions and prove several
technical results related to approximate commutation relations
and order zero maps.
Corollary~\ref{C_6208_EqForCPCZ}
gives a very explicit characterization
of which linear maps from $M_n$ are
completely positive contractive with order zero.
In Section~\ref{Sec_TZAbs}
we recall the definition of tracial $\JS$-absorption
and prove the main results.

We use Section~1 of~\cite{PhLarge}
as a general reference for facts about Cuntz comparison,
although the results we use are not new there.
Also,
we will repeatedly use without comment the fact
(Proposition~5.2 of~\cite{PhLarge})
that if $B$ is a large subalgebra of a simple unital \ca~$A$,
then $B$ is simple.

\section{Centrally Large Subalgebras and Approximate
 Commutation}\label{Sec_AppCm}

We recall the definitions of large and centrally large subalgebras
(Definition~4.1 of~\cite{PhLarge}
and Definition~3.2 of~\cite{ArPh}).

\begin{dfn}\label{LargeSubalg}
Let $A$ be a simple unital infinite dimensional \ca.
A unital subalgebra $B \subset A$ is said to
be {\emph{large}} in $A$ if for every $\ep > 0$,
every $m \in \N$,
all $a_{1}, a_{2}, \ldots, a_{m} \in A$,
every $y \in B_{+} \setminus \set{0}$, and
every $x \in A_{+} \setminus \set{0}$ with $\norm{x} = 1$,
there exist
$c_{1}, c_{2}, \ldots, c_{m} \in A$ and $g \in B$ such that:
\begin{enumerate}
\item\label{LargeSubalg_1}
$0 \leq g \leq 1$.
\item\label{LargeSubalg_2}
$\norm{c_{j} - a_{j}} < \ep$ for $j = 1, 2, \ldots, m$.
\item\label{LargeSubalg_3}
$(1 - g) c_{j} \in B$ for $j = 1, 2, \ldots, m$.
\item\label{LargeSubalg_4}
$g \cuntz_{B} y$ and $g \cuntz_{A} x$.
\item\label{LargeSubalg_5}
\setcounter{TmpEnumi}{\value{enumi}}
$\norm{ (1 - g) x (1 - g) } > 1 - \ep$.
\end{enumerate}
We say that $B$ is {\emph{centrally large}} in~$A$
if we can in addition arrange that:
\begin{enumerate}
\setcounter{enumi}{\value{TmpEnumi}}
\item\label{LargeSubalg_6}
$\norm{ g a_{j} - a_{j} g } < \ep$ for $j = 1, 2, \ldots, m$.
\end{enumerate}
\end{dfn}

Lemma~\ref{L_9430_LgTower} below is a strengthening of Definition
\ref{LargeSubalg} in the following ways.
The element $g$ can be replaced by a tower of elements,
as shown in Lemma~6.1 of~\cite{ArPh}.
Additionally, by proceeding as in the
proof of Lemma~4.7 of~\cite{PhLarge} at the appropriate step,
it can be arranged that the elements $c_{j}$ which are chosen
satisfy $\norm{c_{j}} \leq \norm {a_{j}}$.
We omit the details of the proof;
see \cite{ArPh} and~\cite{PhLarge}.

\begin{lem}\label{L_9430_LgTower}
Let $A$ be an infinite dimensional simple separable unital
\ca,
and let $B \subset A$ be a centrally large subalgebra.
Then for all $m, N \in \N$,
all $a_1, a_2 \ldots, a_m \in A$,
every $\ep > 0$,
every $x \in A_{+}$ with $\| x \| = 1$,
and every $y \in B_{+} \setminus \{ 0 \}$,
there are $c_1, c_2 \ldots, c_m \in A$
for $j = 1, 2, \ldots, m$ and
$g_0, g_1, \ldots, g_N \in B$ such that:
\begin{enumerate}
\item\label{9430_LgTower-Cut1}
$0 \leq g_n \leq 1$
for $n = 0, 1, \ldots, N$
and $g_{n - 1} g_n = g_n$ for $n = 1, 2, \ldots, N$.
\item\label{9430_LgTower-Cut2}
For $j = 1, 2, \ldots, m$ we
have $\| c_j - a_j \| < \ep$.
\item\label{9430_LgTower-Cut3}
For $j = 1, 2, \ldots, m$ and
$n = 0, 1, \ldots, N$, we have $(1 - g_n) c_j \in B$.
\item\label{9430_LgTower-Cut4}
For $n = 0, 1, \ldots, N$, we
have $g_n \cuntz_B y$ and $g_n \cuntz_A x$.
\item\label{9430_LgTower-Cut5a}
For $n = 0, 1, \ldots, N$, we
have $\| (1 - g_n) x (1 - g_n) \| > 1 - \ep$.
\item\label{9430_LgTower-Cut5}
For $j = 1, 2, \ldots, m$ and
$n = 0, 1, \ldots, N$, we have $\| g_n a_j - a_j g_n \| < \ep$.
\item\label{9430_LgTower-CutNorm}
For $j = 1, 2, \ldots, m$
we have $\norm{c_{j}} \leq \norm{a_{j}}$.
\end{enumerate}
\end{lem}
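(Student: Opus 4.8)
The plan is to obtain the whole tower from a single application of the centrally large condition (Definition~\ref{LargeSubalg}), turning the one element it produces into $g_0, \ldots, g_N$ by continuous functional calculus in the spirit of Lemma~6.1 of~\cite{ArPh}, while getting the norm bound \eqref{9430_LgTower-CutNorm} by choosing the $c_j$ as in the proof of Lemma~4.7 of~\cite{PhLarge}. First I would fix continuous functions $f_0, f_1, \ldots, f_N \colon [0,1] \to [0,1]$ forming a staircase: $f_n$ vanishes on $[0, a_n]$ and equals $1$ on $[b_n, 1]$, with $0 < a_0 < b_0 \le a_1 < b_1 \le \cdots \le a_N < b_N < 1$, so that $f_{n-1} \equiv 1$ on $\set{f_n > 0}$ and $f_n(0) = 0$, $f_n(1) = 1$ for each $n$. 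I would then apply Definition~\ref{LargeSubalg} to the data $a_1, \ldots, a_m$, $x$, $y$ with a tolerance $\ep' \le \ep$ to be fixed, obtaining $g \in B$ and $c_1, \ldots, c_m \in A$, and set $g_n = f_n(g) \in B$.

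With this choice, \eqref{9430_LgTower-Cut2} and \eqref{9430_LgTower-CutNorm} come directly from the base application (the latter via the method of~\cite[Lemma~4.7]{PhLarge}), and \eqref{9430_LgTower-Cut1} holds because the staircase relations give $f_{n-1} f_n = f_n$, hence $g_{n-1} g_n = g_n$. For \eqref{9430_LgTower-Cut4} I would note that $f_n(0) = 0$ yields $g_n \cuntz_B g$ and $g_n \cuntz_A g$ already inside $C^*(g)$, so transitivity of Cuntz comparison together with the base relations $g \cuntz_B y$, $g \cuntz_A x$ finishes it.

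The membership $(1 - g_n) c_j \in B$ of \eqref{9430_LgTower-Cut3} is where the normalization $f_n(1) = 1$ is used. Writing $b_j = (1 - g) c_j \in B$ and $\psi_n(t) = t\big(1 - f_n(t)\big)$, a short computation gives $(1 - g_n) c_j = \big(1 - f_n(g)\big) b_j + \psi_n(g) c_j$; since $\psi_n(1) = 0$ we may factor $\psi_n(t) = (1 - t)\rho_n(t)$ with $\rho_n$ continuous, so $\psi_n(g) c_j = \rho_n(g) b_j \in B$ and both summands lie in $B$. For the approximate commutation \eqref{9430_LgTower-Cut5} I would approximate each $f_n$ uniformly by a polynomial and use $\norm{[g^k, a_j]} \le k\,\norm{[g, a_j]}$ to get $\norm{[g_n, a_j]} \le M_n \norm{[g, a_j]} + \tfrac{\ep}{2}$ for a constant $M_n$ depending only on $f_n$.

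The main obstacle is \eqref{9430_LgTower-Cut5a}: since $f_n(t) = 1$ for $t \ge b_n$, the element $g_n$ need not satisfy $g_n \le g$, so the bound cannot be read off from $\norm{(1 - g) x (1 - g)} > 1 - \ep'$ by monotonicity. I would instead argue through a state. Pick a state $\om$ with $\om\big((1 - g) x (1 - g)\big) > 1 - \ep'$; from $(1 - g) x (1 - g) \le (1 - g)^2$ and $2g - g^2 \ge g$ one gets $\om(g) < \ep'$, and since $f_n(t) \le t / a_n$ this gives $\om(g_n) \le \ep'/a_n$. A Cauchy--Schwarz estimate for $\om$ then bounds $\om\big((1 - g_n) x (1 - g_n)\big)$ below by $\om(x)$ minus a quantity of order $\sqrt{\ep'/a_n}$, while the same computation gives $\om(x) > 1 - \ep' - 3\sqrt{\ep'}$. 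Hence \eqref{9430_LgTower-Cut5a} follows once $\ep'$ is taken small relative to $\ep$ and to the finitely many $N$-dependent constants $a_n$ and $M_n$ fixed above; since all the constraints on $\ep'$ are finite in number, they can be satisfied simultaneously, completing the plan.
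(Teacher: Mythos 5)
The paper gives no proof of this lemma: it defers the tower construction to Lemma~6.1 of \cite{ArPh} and the norm bound (\ref{9430_LgTower-CutNorm}) to the method of Lemma~4.7 of \cite{PhLarge}. Your proposal is a correct reconstruction along exactly the indicated lines (a single application of Definition~\ref{LargeSubalg} with a small tolerance $\ep'$, followed by a staircase of functions of the one element $g$ it produces), and it correctly isolates and resolves the only nonroutine point, namely condition~(\ref{9430_LgTower-Cut5a}), where $g_n \leq g$ necessarily fails because $f_{n-1}$ must equal $1$ on the support of $f_n$; your state/Cauchy--Schwarz argument there is valid, and the verifications of (\ref{9430_LgTower-Cut1})--(\ref{9430_LgTower-Cut4}) and (\ref{9430_LgTower-Cut5}) all check.
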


As was shown in the proof
of Lemma 6.1 of~\cite{ArPh},
it is enough to take $n = 0$
in~(\ref{9430_LgTower-Cut4}) and~(\ref{9430_LgTower-Cut5a})
and $n = N$ in (\ref{9430_LgTower-Cut3}).

In the definition of tracial $\JS$-absorption,
one needs to control the norms of certain commutators
in~$A$
in terms of norms of commutators in~$B$.
The following lemma contains the basic estimate.
We will combine it with the choice $N = 1$
in Lemma~\ref{L_9430_LgTower}.

\begin{lem}\label{L-phZcjCom}
For every $\ep > 0$ there exists $\dt > 0$ such that the following
holds.
Let $A$ be a \ca,
and let $z, z_0, c, g_0, g_1 \in A$ satisfy:
\begin{enumerate}
\item\label{L_5715_CommEst_Order}
$0 \leq g_1 \leq g_0 \leq 1$.
\item\label{L_5715_CommEst_RelUnit}
$g_0 g_1 = g_1$.
\item\label{L_5715_CommEst_Norm}
$\norm{z} \leq 1$, $\norm{z_0} \leq 1$, and $\norm{c} \leq 1$.
\item\label{L_5715_CommEst_Commgc02}
$\norm{ [c, \, g_0] } < \dt$
and $\norm{ [c, \, g_1] } < \dt$.
\item\label{L_5715_CommEst_Commg2c}
$\norm{ [z_0, \, (1 - g_1) c ] } < \dt$.
\item\label{L_5715_CommEst_CommG0}
$\norm{ [z_0, \, g_0 ] } < \dt$.
\item\label{L_5715_CommEst_Dist}
$\bnorm{ z - (1 - g_0)^{1/2} z_0 (1 - g_0)^{1/2} } < \dt$.
\end{enumerate}
Then
$\norm{[z, c]} < \ep$.
\end{lem}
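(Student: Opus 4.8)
The plan is to substitute the near-factorization~(\ref{L_5715_CommEst_Dist}) for $z$ and then commute $c$ and $z_0$ past the positive element $h = (1 - g_0)^{1/2}$, using~(\ref{L_5715_CommEst_RelUnit}) to annihilate exactly the part of $[z_0, c]$ that is not already controlled by~(\ref{L_5715_CommEst_Commg2c}). The algebraic fact driving this is $h g_1 = 0$: from~(\ref{L_5715_CommEst_Order}) and~(\ref{L_5715_CommEst_RelUnit}) one has $g_1 g_0 = (g_0 g_1)^{*} = g_1^{*} = g_1$, so $g_1 (1 - g_0) = 0$, whence $g_1 h^2 = 0$ and $\norm{h g_1}^2 = \norm{g_1 h^2 g_1} = 0$, giving $h g_1 = g_1 h = 0$.

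The technical heart is to turn smallness of $\norm{[c, g_0]}$ and $\norm{[z_0, g_0]}$ into smallness of $\norm{[c, h]}$ and $\norm{[z_0, h]}$, uniformly over all \ca{}s. Since $h = f(g_0)$ for the fixed function $f(t) = (1 - t)^{1/2}$ on $[0, 1]$, I would fix a polynomial $p$ with $\norm{f - p}_{\infty}$ small. Then $[a, g_0^{k}] = \sum_{i = 0}^{k - 1} g_0^{i} [a, g_0] g_0^{k - 1 - i}$ together with $\norm{g_0} \le 1$ yields $\norm{[a, p(g_0)]} \le C_p \norm{[a, g_0]}$ for a constant $C_p$ depending only on $p$, while $\norm{[a, h] - [a, p(g_0)]} \le 2 \norm{f - p}_{\infty}$. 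Choosing $p$ first and then $\dt$ makes $\norm{[c, h]}$ and $\norm{[z_0, h]}$ as small as desired.

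The rest is a telescoping estimate. By~(\ref{L_5715_CommEst_Dist}) and~(\ref{L_5715_CommEst_Norm}), $\norm{[z, c] - [h z_0 h, c]} < 2 \dt$, and commuting $c$ past $h$ on each side gives $\norm{[h z_0 h, c] - h [z_0, c] h} \le 2 \norm{[c, h]}$. Splitting $[z_0, c] = [z_0, (1 - g_1) c] + [z_0, g_1 c]$, the first term contributes at most $\dt$ by~(\ref{L_5715_CommEst_Commg2c}); in $h [z_0, g_1 c] h = h z_0 g_1 c h - h g_1 c z_0 h$ the last summand vanishes since $h g_1 = 0$, and $\norm{h z_0 g_1 c h} = \norm{(h z_0 - z_0 h) g_1 c h} \le \norm{[z_0, h]}$ because $z_0 h g_1 c h = 0$. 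Assembling, $\norm{[z, c]} \le 3 \dt + 2 \norm{[c, h]} + \norm{[z_0, h]}$, which is below $\ep$ once $p$ and then $\dt$ are chosen.

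The only genuine obstacle I anticipate is the functional-calculus step, specifically the order of the quantifiers: since $t \mapsto (1 - t)^{1/2}$ fails to be operator Lipschitz near $t = 1$, one should not expect a bound $\norm{[a, h]} \le C \norm{[a, g_0]}$ with $C$ independent of the algebra, so the polynomial $p$ and the residual $\norm{f - p}_{\infty}$ must be fixed before $\dt$. A cleaner alternative that removes the approximation is a compactness argument: from a hypothetical sequence of counterexamples with $\dt = 1/n$, pass to $\prod_{n} A_n \big/ \bigoplus_{n} A_n$, where the images of $c$ and $z_0$ commute with $g_0$, hence with $h$, the factorization~(\ref{L_5715_CommEst_Dist}) becomes exact, and the computation collapses to the identity $[z, c] = 0$, a contradiction. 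Incidentally, neither $\norm{[c, g_1]} < \dt$ from~(\ref{L_5715_CommEst_Commgc02}) nor the inequality $g_1 \le g_0$ is used along this route.
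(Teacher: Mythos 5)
Your argument is correct, and the quantifier order you worry about in the functional-calculus step is handled properly; the polynomial-approximation device you build by hand is exactly what Lemma~2.5 of~\cite{ArPh} packages, and the paper invokes that lemma for the same purpose (with $f (\ld) = \ld^{1/2}$ applied to $1 - g_0$). The overall strategy coincides with the paper's --- substitute the near-factorization for $z$, commute past $h = (1 - g_0)^{1/2}$, and use $g_0 g_1 = g_1$ to reduce everything to the controlled commutator $[z_0, \, (1 - g_1) c]$ --- but the bookkeeping differs in two respects worth recording. First, the paper never needs $\norm{[c, h]}$ to be small: it commutes $c$ past $1 - g_0$ (not its square root) using $\norm{[c, g_0]} < \dt$ directly, and applies the square-root lemma only to $z_0$; you need both $[c, h]$ and $[z_0, h]$ small, which costs nothing but is a real structural difference. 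Second, where you use the exact identity $h g_1 = 0$ to annihilate the $g_1 c$ part of $[z_0, c]$, the paper instead proves $\norm{z - (1 - g_1) z} < 2 \dt$ and $\norm{z - z (1 - g_1)} < 2 \dt$ (both resting on the same identity $g_1 (1 - g_0) = 0$) and then passes from $[z, \, (1 - g_1) c]$ back to $[z, c]$, at the price of invoking $\norm{[c, g_1]} < \dt$ --- a hypothesis your route genuinely does not need, as you observe. Your ultraproduct alternative is also valid and is a different argument from the paper's: in $\prod_n A_n \big/ \bigoplus_n A_n$ all the approximate hypotheses become exact and the conclusion collapses to the algebraic identity you isolate. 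One small caveat on your closing remark: you still use $\norm{g_1} \leq 1$, a consequence of $0 \leq g_1 \leq g_0 \leq 1$, when bounding $\norm{ [h, z_0] \, g_1 c h }$ by $\norm{[z_0, h]}$.
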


\begin{proof}
Let $\ep > 0$ be given.
Apply Lemma 2.5 of~\cite{ArPh} with $\frac{\ep}{11}$
in place of $\ep$
and with the function $f (\ld) = \ld^{1/2}$,
obtaining $\dt_0 > 0$ such that
whenever $D$ is a \ca{} and $a, b \in D$
satisfy the relations $\| [a, b ] \| < \dt_0$,
$0 \leq a \leq 1$, and $\norm{b} \leq 1$,
then $\| [a^{1/2}, b ] \| < \frac{\ep}{11}$.
Set $\dt = \min \big( \frac{\ep}{11}, \dt_0 \big)$.
This is the number whose existence is asserted in the lemma.

Now let $A$ be a \ca,
and let $z, z_0, c, g_0, g_1 \in A$ satisfy
(\ref{L_5715_CommEst_Order})--(\ref{L_5715_CommEst_Dist}).

We begin by estimating
\[
\bnorm{
 \big[ (1 - g_0)^{1/2} z_0 (1 - g_0)^{1/2}, \, (1 - g_1) c \big]}.
\]
Using (\ref{L_5715_CommEst_CommG0})
and the choice of $\dt_0$,
we get
\[
\bnorm{z_0 (1 - g_0)^{1/2} - (1 - g_0)^{1/2} z_0}
 < \frac{\ep}{11}.
\]
Since all the terms have norm at most~$1$
(by (\ref{L_5715_CommEst_Order}) and~(\ref{L_5715_CommEst_Norm})),
we can use this relation twice at the second step,
use (\ref{L_5715_CommEst_Commg2c})
and~(\ref{L_5715_CommEst_Commgc02}) at the third step,
and use $g_0 g_1 = g_1 g_0$
(from (\ref{L_5715_CommEst_Order})
and~(\ref{L_5715_CommEst_RelUnit})),
to get
\begin{align*}
& \bnorm{\big[ (1 - g_0)^{1/2} z_0 (1 - g_0)^{1/2}, \, (1 - g_1) c \big]}
\\
& \alignInd \mbox{}
 = \bnorm{ (1 - g_0)^{1/2} z_0 (1 - g_0)^{1/2} (1 - g_1) c
    - (1 - g_1) c (1 - g_0)^{1/2} z_0 (1 - g_0)^{1/2} }
\\
& \alignInd \mbox{}
 < \frac{2 \ep}{11}
   + \bnorm{ (1 - g_0) z_0 (1 - g_1) c
    - (1 - g_1) c (1 - g_0) z_0 }
\\
& \alignInd \mbox{}
 < \frac{2 \ep}{11} + \dt + \dt
   + \bnorm{ (1 - g_0) (1 - g_1) c z_0
    - (1 - g_1) (1 - g_0) c z_0 }
\\
& \alignInd \mbox{}
 = \frac{2 \ep}{11} + \dt + \dt
 \leq \frac{4 \ep}{11}.
\end{align*}
Since all the terms have norm at most~$1$,
it now follows from~(\ref{L_5715_CommEst_Dist})
that
\begin{equation}\label{Eq_5715_zcommcg2}
\norm{[ z, \, (1 - g_1) c ]}
 < \frac{4 \ep}{11} + 2 \bnorm{ z - (1 - g_0)^{1/2} z_0 (1 - g_0)^{1/2} }
 < \frac{4 \ep}{11} + 2 \dt
 \leq \frac{6 \ep}{11}.
\end{equation}

Next, we estimate
$\norm{z - (1 - g_1) z}$ and $\norm{z - z (1 - g_1)}$.
The relation
\[
(1 - g_1) (1 - g_0) = (1 - g_0) (1 - g_1) = 1 - g_0
\]
(from~(\ref{L_5715_CommEst_RelUnit}))
implies that
\[
(1 - g_1) (1 - g_0)^{1/2} z_0 (1 - g_0)^{1/2}
 = (1 - g_0)^{1/2} z_0 (1 - g_0)^{1/2}.
\]
and
\[
(1 - g_0)^{1/2} z_0 (1 - g_0)^{1/2} (1 - g_1)
 = (1 - g_0)^{1/2} z_0 (1 - g_0)^{1/2}.
\]
Therefore,
using~(\ref{L_5715_CommEst_Dist}) and $\norm{1 - g_1} \leq 1$
at the second step,
\begin{equation}\label{Eq_5715_Cutg2}
\norm{z - (1 - g_1) z}
 \leq \big( 1 + \norm{1 - g_1} \big)
   \bnorm{ z - (1 - g_0)^{1/2} z_0 (1 - g_0)^{1/2} }
 < 2 \dt.
\end{equation}
Similarly
\begin{equation}\label{Eq_5715_OtherCut}
\norm{z - z (1 - g_1)} < 2 \dt.
\end{equation}

At the second step in the following calculation,
we use (\ref{Eq_5715_OtherCut}) and $\norm{c} \leq 1$ on the first term,
(\ref{Eq_5715_zcommcg2}) on the second term,
(\ref{L_5715_CommEst_Commgc02}) and $\norm{z} \leq 1$ on the third term,
and (\ref{Eq_5715_Cutg2}) on the fourth term:
\begin{align*}
\norm{[ z, c ]}
& \leq \norm{z - z (1 - g_1)} \norm{c}
  + \bnorm{ z (1 - g_1) c - (1 - g_1) c z }
\\
& \alignInd \mbox{}
  + \bnorm{ (1 - g_1) c - c (1 - g_1) } \norm{z}
  + \norm{c} \norm{(1 - g_1) z - z}
\\
& < 2 \dt + \frac{6 \ep}{11} + \dt + 2 \dt
  \leq \ep.
\end{align*}
This completes the proof.
\end{proof}

Lemma~\ref{L_6207_cpcZPtb} below
is closely related to Lemma 1.2.5 of~\cite{WinCovDim2}.
The proof given in~\cite{WinCovDim2} is very sketchy.
In particular,
the elements in the range of $\ph$
are not in the linear span of the images of
the generators of $C M_n$ used there,
only in the \ca{} they generate.
We address this issue by using a different presentation of $C M_n$.
The following notation is convenient.

\begin{ntn}\label{N_6207_ConeNtn}
For $n \in \N$,
let $(e_{j, k})_{j, k = 1, 2, \ldots, n}$
be the standard system of matrix units for~$M_n$.
We take the cone $C M_n$ over $M_n$ to be
\[
C M_n
  = C_0 ((0, 1]) \otimes M_n
  = \big\{ f \in C ([0, 1], \, M_n) \colon f (0) = 0 \big\}.
\]
We let $t \in C_0 ((0, 1])$
be the function $t (\ld) = \ld$ for $\ld \in (0, 1]$.
For $j, k = 1, 2, \ldots, n$
we define $f_{j, k} \in C M_n$
by $f_{j, k} = t \otimes e_{j, k}$.

If $A$ is a \ca,
we denote its unitization by~$A^{+}$,
adding a new identity even if $A$ is already unital.
\end{ntn}

\begin{lem}\label{L_6207_ConeRel}
Let $n \in \N$.
Let $C$ be the universal \ca{}
on generators $x_{j, k}$ for $j, k = 1, 2, \ldots, n$,
subject to the following relations:
\begin{enumerate}
\item\label{L_6207_ConeRel_CMid}
$x_{j, k} x_{k, m} = x_{j, l} x_{l, m}$
for $j, k, l, m = 1, 2, \ldots, n$.
\item\label{L_6207_ConeRel_Orth}
$x_{j, j} x_{k, k} = 0$
for $j, k = 1, 2, \ldots, n$
with $j \neq k$.
\item\label{L_6207_ConeRel_Adj}
$x_{j, k} = x_{k, j}^*$ for $j, k = 1, 2, \ldots, n$.
\item\label{L_6207_ConeRel_Norm}
$\| x_{j, j} \| \leq 1$ for $j = 1, 2, \ldots, n$.
\item\label{L_6207_ConeRel_Pos}
$\| 1 - x_{j, j} \| \leq 1$ for $j = 1, 2, \ldots, n$.
\setcounter{TmpEnumi}{\value{enumi}}
\end{enumerate}
Then there is an isomorphism $\rh \colon C \to (C M_n)^{+}$
such that $\rh (x_{j, k}) = f_{j, k}$ for $j, k = 1, 2, \ldots, n$.
\end{lem}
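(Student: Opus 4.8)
The plan is to produce $\rh$ from the universal property of $C$, to record that the $f_{j, k}$ together with the unit generate $(C M_n)^{+}$, and then to build an explicit inverse $\sm$ using the universal property of the cone.

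First I would verify that the elements $f_{j, k} = t \ten e_{j, k}$ satisfy relations (\ref{L_6207_ConeRel_CMid})--(\ref{L_6207_ConeRel_Pos}) in $(C M_n)^{+}$. Relations (\ref{L_6207_ConeRel_CMid})--(\ref{L_6207_ConeRel_Adj}) are immediate from $f_{j, k} f_{k, m} = t^2 \ten e_{j, m}$, from $e_{j, j} e_{k, k} = 0$ for $j \neq k$, and from $t^{*} = t$; relation (\ref{L_6207_ConeRel_Norm}) holds since $\norm{t} = 1$; and for relation (\ref{L_6207_ConeRel_Pos}) one evaluates $1 - f_{j, j}$ at $\ld \in [0, 1]$, obtaining a matrix with eigenvalues $1$ and $1 - \ld \in [0, 1]$, so $\norm{1 - f_{j, j}} = 1$. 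The same algebraic manipulations, carried out in an arbitrary representation of the relations, show that every solution consists of contractions: indeed $x_{j, k} x_{j, k}^{*} = x_{j, k} x_{k, j} = x_{j, j}^{2}$ by (\ref{L_6207_ConeRel_CMid}), so $\norm{x_{j, k}} \leq 1$. Hence $C$ exists, and the universal property yields a unital homomorphism $\rh \colon C \to (C M_n)^{+}$ with $\rh (x_{j, k}) = f_{j, k}$.

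Next I would note that products of the $f_{j, k}$ along chains of indices give $t^{p} \ten e_{j, m}$ for every $p \in \N$, and since the functions $t^{p}$ separate the points of $(0, 1]$ and vanish nowhere on it, Stone--Weierstrass shows that the $f_{j, k}$ generate $C_0 ((0, 1]) \ten M_n = C M_n$; together with the unit they generate $(C M_n)^{+}$. This both gives surjectivity of $\rh$ and will be used to verify $\rh \circ \sm = \id$ below. To construct $\sm$, the key point is that $\Ps \colon M_n \to C$ defined by $\Ps (e_{j, k}) = x_{j, k}$ (extended linearly) is a completely positive contractive order zero map. From the relations one extracts that $d_j := x_{j, j}$ is positive (by (\ref{L_6207_ConeRel_Adj}), (\ref{L_6207_ConeRel_Norm}), and (\ref{L_6207_ConeRel_Pos})), that the $d_j$ are pairwise orthogonal contractions (by (\ref{L_6207_ConeRel_Orth})), that $x_{j, k}^{*} x_{j, k} = d_k^{2}$ and $x_{j, k} x_{j, k}^{*} = d_j^{2}$, and that $x_{j, k} x_{l, m} = 0$ whenever $k \neq l$ (since then $(x_{j, k} x_{l, m}) (x_{j, k} x_{l, m})^{*} = x_{j, k} d_l^{2} x_{k, j}$, while $x_{j, k} d_l = 0$ by (\ref{L_6207_ConeRel_Orth})). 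With (\ref{L_6207_ConeRel_CMid}) this yields $\Ps (a) \Ps (b) = Y (a b)$ for all $a, b \in M_n$, where $Y (e_{j, m}) = x_{j, k} x_{k, m}$; in particular $\Ps$ preserves orthogonality. Granting that $\Ps$ is cpc order zero, the universal property of the cone produces a homomorphism $\sm_0 \colon C M_n \to C$ with $\sm_0 (t \ten e_{j, k}) = x_{j, k}$, which extends to a unital homomorphism $\sm \colon (C M_n)^{+} \to C$; evaluating $\sm \circ \rh$ and $\rh \circ \sm$ on the generators and the unit then shows both are the identity, so $\rh$ is an isomorphism.

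The main obstacle is complete positivity of $\Ps$, equivalently positivity of $[x_{j, k}] \in M_n (C)$. The multiplicative and orthogonality relations come directly from (\ref{L_6207_ConeRel_CMid}) and (\ref{L_6207_ConeRel_Orth}), but to get positivity I expect to produce the order zero structure in $C^{**}$: the polar decompositions $x_{j, k} = w_{j, k} d_k$ (using $x_{j, k}^{*} x_{j, k} = d_k^{2}$) yield partial isometries $w_{j, k}$, and the relations should force the $w_{j, k}$ to be a system of matrix units commuting with the positive contraction $h := \sum_{j} d_j$, so that $\Ps (a) = h \pi (a)$ for the homomorphism $\pi \colon M_n \to C^{**}$ with $\pi (e_{j, k}) = w_{j, k}$. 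Verifying that the $w_{j, k}$ are genuine matrix units and that $h$ commutes with $\pi (M_n)$ is the delicate part, and it is here that (\ref{L_6207_ConeRel_Pos}), which fixes $d_j \geq 0$ and hence the correct sign of $t$, is essential. Alternatively one may invoke the Winter--Zacharias structure theorem for order zero maps to bypass the explicit construction of the $w_{j, k}$.
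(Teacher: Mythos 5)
Your overall architecture is sound and genuinely different from the paper's. The paper never touches complete positivity: it invokes Loring's presentation of $C M_n$ on the $n-1$ generators $f_{j,1}$ for $j \geq 2$, subject to purely algebraic relations (Proposition 3.3.1 of~\cite{Lr}), checks that $x_{2,1}, \ldots, x_{n,1}$ satisfy those relations exactly, and thereby obtains a unital homomorphism $\mu \colon (C M_n)^{+} \to C$ with $\rh \circ \mu = \id$; the remaining work is to show that $\mu$ is surjective, which is done by functional calculus (one gets $x_{j,j} \in \mu((C M_n)^{+})$ from $x_{j,j}^2 = x_{j,1} x_{1,j}$, then $x_{j,j}^{\af} x_{j,k} \in \mu((C M_n)^{+})$ by polynomial approximation, and finally $x_{j,k} = \lim_{r} x_{j,j}^{1/r} x_{j,k}$). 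Your route instead builds the inverse from the Winter--Zacharias correspondence between homomorphisms on $C M_n$ and completely positive contractive order zero maps on $M_n$, which requires proving that $e_{j,k} \mapsto x_{j,k}$ is completely positive. That is a strictly harder statement than anything the paper has to verify, and it is exactly the point your proposal does not settle.

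Concretely: the polar-decomposition strategy in $C^{**}$ can be made to work --- from relation~(\ref{L_6207_ConeRel_CMid}) with $l = m$ one gets $d_k w_{k,m} d_m = w_{k,m} d_m^2$, hence $d_k w_{k,m} = w_{k,m} d_m$ on the support projection $p_m$, and from this the matrix-unit identities $w_{j,k} w_{k,m} = w_{j,m}$ and the commutation of $h$ with $\pi(M_n)$ follow --- but you have only flagged this as ``the delicate part,'' and it is the entire content of the positivity claim; without it there is no cpc order zero map and hence no homomorphism $\sm_0$. Moreover, your suggested shortcut of invoking the Winter--Zacharias structure theorem ``to bypass the explicit construction'' is circular: Theorem~3.3 of~\cite{WnZc} takes a completely positive order zero map as its \emph{hypothesis} and produces the pair $(\pi, h)$ as its conclusion, so it cannot supply the complete positivity you are missing. (A smaller point: your orthogonality computation quietly uses $x_{j,k} x_{l,l} = 0$ for $k \neq l$, which is not literally relation~(\ref{L_6207_ConeRel_Orth}) and needs the approximate-identity identity $x_{j,k} = \lim_r x_{j,k} x_{k,k}^{1/r}$ first.) Either carry out the bidual construction in full, or switch to the presentation-plus-surjectivity argument, which avoids positivity considerations entirely.
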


\begin{proof}
We claim that the relations in the statement imply the following
additional relations:
\begin{enumerate}
\setcounter{enumi}{\value{TmpEnumi}}
\item\label{L_6207_ConeRel_IsP}
$x_{j, j} \geq 0$
for $j = 1, 2, \ldots, n$.
\item\label{L_6207_ConeRel_FullNorm}
$\| x_{j, k} \| \leq 1$ for $j, k = 1, 2, \ldots, n$.
\item\label{L_6207_ConeRel_AppId}
$x_{j, k} = \lim_{n \to \infty} x_{j, j}^{1/n} x_{j, k}
  = \lim_{n \to \infty} x_{j, k} x_{k, k}^{1/n}$
for $j, k = 1, 2, \ldots, n$.
\item\label{L_6207_ConeRel_FullOrth}
$x_{j, k} x_{l, m} = 0$
for $j, k, l, m = 1, 2, \ldots, n$
with $k \neq l$.
\setcounter{TmpEnumi}{\value{enumi}}
\end{enumerate}
Relation~(\ref{L_6207_ConeRel_IsP})
follows from~(\ref{L_6207_ConeRel_Adj})
(which implies that $x_{j, j}$ is selfadjoint),
(\ref{L_6207_ConeRel_Norm}), and~(\ref{L_6207_ConeRel_Pos}).
To prove~(\ref{L_6207_ConeRel_FullNorm}),
use (\ref{L_6207_ConeRel_Adj}),
(\ref{L_6207_ConeRel_CMid}), and~(\ref{L_6207_ConeRel_Norm})
to see that
$\| x_{j, k}^* x_{j, k} \| = \| x_{k, k}^2 \| \leq 1$.
For the first part of~(\ref{L_6207_ConeRel_AppId}),
use (\ref{L_6207_ConeRel_CMid}), (\ref{L_6207_ConeRel_Adj}),
and~(\ref{L_6207_ConeRel_IsP}),
to get $x_{j, j} = (x_{j, k} x_{j, k}^*)^{1/2}$;
now use the general fact $\lim_{n \to \infty} (a a^*)^{1/n} a = a$
for any element $a$ of any \ca.
The second part of~(\ref{L_6207_ConeRel_AppId}) follows by taking
adjoints and using~(\ref{L_6207_ConeRel_Adj}).
To prove~(\ref{L_6207_ConeRel_FullOrth}),
we now use (\ref{L_6207_ConeRel_AppId}) at the first step
and (\ref{L_6207_ConeRel_Orth}) at the second step
to get
\[
x_{j, k} x_{l, m}
 = \lim_{n \to \infty} x_{j, k} x_{k, k}^{1/n} x_{l, l}^{1/n} x_{l, m}
 = \lim_{n \to \infty} x_{j, k} \cdot 0 \cdot x_{l, m}
 = 0.
\]
This completes the proof of the claim.

In the rest of the proof, we follow Notation~\ref{N_6207_ConeNtn}.

It is immediate that there is a unital homomorphism
$\rh \colon C \to (C M_n)^{+}$
such that $\ph (x_{j, k}) = f_{j, k}$ for $j, k = 1, 2, \ldots, n$.
If $n = 1$
then (\ref{L_6207_ConeRel_CMid}) and~(\ref{L_6207_ConeRel_Orth})
are vacuous,
(\ref{L_6207_ConeRel_Adj}) says that $x_{1, 1}$ is selfadjoint,
(\ref{L_6207_ConeRel_Norm}) says that $\| x_{1, 1} \| \leq 1$,
and (\ref{L_6207_ConeRel_Pos})
then just says that $x_{1, 1}$ is positive.
The conclusion is now easy.
So assume $n \geq 2$.

Let $D$ be the universal \ca{}
on generators $y_{j}$ for $j = 2, 3, \ldots, n$,
subject to the relations:
\begin{enumerate}
\setcounter{enumi}{\value{TmpEnumi}}
\setcounter{enumi}{\value{TmpEnumi}}
\item\label{L_6207_SmConeRel_Diag}
$y_j^* y_j = y_2^* y_2$ for $j = 3, 4, \ldots, n$.
\item\label{L_6207_SmConeRel_PZ}
$y_j y_k = 0$ for $j, k = 2, 3, \ldots, n$.
\item\label{L_6207_SmConeRel_AdjZ}
$y_j^* y_k = 0$ for $j, k = 2, 3, \ldots, n$ with $j \neq k$.
\item\label{L_6207_SmConeRel_Norm}
$\| y_j \| \leq 1$ for $j = 2, 3, \ldots, n$.
\end{enumerate}
Recall from Proposition 3.3.1 of~\cite{Lr}
that there is an isomorphism
$\sm \colon C M_n \to D$ such that
$\sm (f_{j, 1}) = y_j$ for $j = 2, 3, \ldots, n$.
Examining the relations
(\ref{L_6207_ConeRel_CMid}),
(\ref{L_6207_ConeRel_Adj}),
(\ref{L_6207_ConeRel_FullNorm}),
and (\ref{L_6207_ConeRel_FullOrth}),
one sees that there is a unital homomorphism
$\mu \colon (C M_n)^{+} \to C$
such that $\mu (f_{j, 1}) = x_{j, 1}$
for $j = 2, 3, \ldots, n$.
Clearly $\rh \circ \mu$ is the identity map on $(C M_n)^{+}$.
The lemma will thus be proved if we show that $\mu$ is surjective.

Define $B = \mu ( (C M_n)^{+} ) \subset C$.
Obviously $1 \in B$,
and $x_{j, 1} \in B$ for $j = 2, 3, \ldots, n$.
It follows from~(\ref{L_6207_ConeRel_Adj})
that $x_{1, j} \in B$ for $j = 2, 3, \ldots, n$.
For $j = 1, 2, \ldots, n$,
recall that $x_{j, j} \geq 0$ by~(\ref{L_6207_ConeRel_IsP}).
If $j \neq 1$,
the relation~(\ref{L_6207_ConeRel_CMid})
implies that $x_{j, j}^2 = x_{j, 1} x_{1, j}$.
Thus $x_{j, j}^2 \in B$,
whence $x_{j, j} = ( x_{j, j}^2 )^{1/2} \in B$.
For $j = 1$
we use~(\ref{L_6207_ConeRel_CMid})
to get instead $x_{1, 1}^2 = x_{1, 2} x_{2, 1} \in B$,
whence $x_{1, 1} \in B$ as before.

Now let $j, k \in \{ 1, 2, \ldots, n \}$ be arbitrary.
For $r \in \N$
we use~(\ref{L_6207_ConeRel_CMid}) to get
\begin{equation}\label{Eq_6209_PowerM}
x_{j, j}^r x_{j, k} = x_{j, j}^{r - 1} x_{j, 1} x_{1, k} \in B.
\end{equation}
For $\af \in (0, \infty)$
we can approximate $\ld \mapsto \ld^{\af}$
on $[0, 1]$ by polynomials with no constant term,
and therefore deduce from~(\ref{Eq_6209_PowerM})
that $x_{j, j}^{\af} x_{j, k} \in B$.
Now $x_{j, k} \in B$ follows
from~(\ref{L_6207_ConeRel_AppId}).
We have shown that $\mu$ is surjective,
and proved the lemma.
\end{proof}

We can now give a very explicit characterization of
completely positive contractive order zero maps from~$M_n$.
For completeness,
we include the already known characterization
in terms of homomorphisms from $C M_n$.

\begin{cor}\label{C_6208_EqForCPCZ}
Let $A$ be a \ca,
let $n \in \N$,
and let $\ph \colon M_n \to A$ be a linear map.
Then the following are equivalent
(using Notation~\ref{N_6207_ConeNtn}
in (\ref{C_6208_EqForCPCZ_ConeHm}),
(\ref{C_6208_EqForCPCZ_UnitRel}),
and~(\ref{C_6208_EqForCPCZ_NoURel})):
\begin{enumerate}
\item\label{C_6208_EqForCPCZ_CPCZ}
$\ph$ is completely positive contractive and has order zero.
\item\label{C_6208_EqForCPCZ_ConeHm}
There is a homomorphism
$\pi \colon C M_n \to A$
such that $\ph (z) = \pi (t \otimes z)$
for all $z \in M_n$.
\item\label{C_6208_EqForCPCZ_UnitRel}
The elements $x_{j, k} = \ph (e_{j, k})$
for $j, k = 1, 2, \ldots, n$
satisfy the relations
(\ref{L_6207_ConeRel_CMid})--(\ref{L_6207_ConeRel_Pos})
of Lemma~\ref{L_6207_ConeRel} as elements of the \ca~$A^+$.
\item\label{C_6208_EqForCPCZ_NoURel}
The elements $x_{j, k} = \ph (e_{j, k})$
for $j, k = 1, 2, \ldots, n$
satisfy the relations
(\ref{L_6207_ConeRel_CMid})--(\ref{L_6207_ConeRel_Norm})
of Lemma~\ref{L_6207_ConeRel},
together with the relation
$x_{j, j} \geq 0$ for $j = 1, 2, \ldots, n$, as elements of the \ca~$A$.
\end{enumerate}
\end{cor}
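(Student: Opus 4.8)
The plan is to prove the chain of equivalences (\ref{C_6208_EqForCPCZ_CPCZ}) $\Leftrightarrow$ (\ref{C_6208_EqForCPCZ_ConeHm}) $\Leftrightarrow$ (\ref{C_6208_EqForCPCZ_UnitRel}) $\Leftrightarrow$ (\ref{C_6208_EqForCPCZ_NoURel}), treating the first equivalence as the classical input and the remaining two as bookkeeping built on Lemma~\ref{L_6207_ConeRel}. For (\ref{C_6208_EqForCPCZ_CPCZ}) $\Leftrightarrow$ (\ref{C_6208_EqForCPCZ_ConeHm}) I would simply invoke the structure theorem for completely positive contractive order zero maps of Winter and Zacharias: a linear map $\ph \colon M_n \to A$ is completely positive, contractive, and of order zero if and only if there is a homomorphism $\pi \colon C M_n \to A$ with $\ph (z) = \pi (t \otimes z)$ for all $z \in M_n$, where $t$ is as in Notation~\ref{N_6207_ConeNtn}. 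This is the already known characterization referred to in the statement, and I would not reprove it.

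The heart of the argument is (\ref{C_6208_EqForCPCZ_ConeHm}) $\Leftrightarrow$ (\ref{C_6208_EqForCPCZ_UnitRel}), which I would deduce from the isomorphism $\rh \colon C \to (C M_n)^{+}$ of Lemma~\ref{L_6207_ConeRel} together with the universal property of~$C$. Writing $x_{j, k} = \ph (e_{j, k})$, condition (\ref{C_6208_EqForCPCZ_UnitRel}) asserts precisely that these elements of $A^{+}$ satisfy the defining relations of~$C$; since relation (\ref{L_6207_ConeRel_Pos}) involves the unit, the universal object $C$ is naturally unital, and by universality the condition is equivalent to the existence of a unital homomorphism $\psi \colon C \to A^{+}$ carrying the generators of~$C$ to the elements $x_{j, k}$. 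Composing with $\rh^{-1}$, this is in turn equivalent to a unital homomorphism $\tilde{\pi} \colon (C M_n)^{+} \to A^{+}$ with $\tilde{\pi} (f_{j, k}) = x_{j, k}$. The one point requiring care, and the main (if modest) obstacle, is passing between this unital $\tilde{\pi}$ and the generally non-unital $\pi$ in (\ref{C_6208_EqForCPCZ_ConeHm}): since the $f_{j, k}$ generate the ideal $C M_n \subset (C M_n)^{+}$ and their images $x_{j, k}$ lie in $A \subset A^{+}$, the restriction $\tilde{\pi}|_{C M_n}$ maps into~$A$ and serves as~$\pi$, while conversely every $\pi$ extends uniquely to a unital $\tilde{\pi}$. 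Linearity of $z \mapsto \pi (t \otimes z)$ and of $\ph$, together with their agreement on the spanning set $\{ e_{j, k} \}$, then yields $\ph (z) = \pi (t \otimes z)$ for all $z \in M_n$.

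Finally, for (\ref{C_6208_EqForCPCZ_UnitRel}) $\Leftrightarrow$ (\ref{C_6208_EqForCPCZ_NoURel}), I would observe that relations (\ref{L_6207_ConeRel_CMid})--(\ref{L_6207_ConeRel_Norm}) involve only the algebra operations, the adjoint, and norms, and so hold in $A$ if and only if they hold in $A^{+}$; the two conditions thus differ only in replacing relation (\ref{L_6207_ConeRel_Pos}), namely $\| 1 - x_{j, j} \| \leq 1$ in $A^{+}$, by the requirement $x_{j, j} \geq 0$ in~$A$. Given (\ref{L_6207_ConeRel_Adj}) and (\ref{L_6207_ConeRel_Norm}), each $x_{j, j}$ is selfadjoint with $\spec (x_{j, j}) \subseteq [-1, 1]$. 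The implication from (\ref{L_6207_ConeRel_Pos}) to positivity is exactly the derivation of relation~(\ref{L_6207_ConeRel_IsP}) already carried out in the proof of Lemma~\ref{L_6207_ConeRel}, and the converse is a matching spectral-mapping observation: if $x_{j, j} \geq 0$ and $\| x_{j, j} \| \leq 1$, then $\spec (x_{j, j}) \subseteq [0, 1]$ forces $\| 1 - x_{j, j} \| \leq 1$. Since positivity of an element of $A$ is unchanged when it is regarded in $A^{+}$, this closes the cycle and completes the proof.
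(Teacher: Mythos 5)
Your proposal is correct and follows essentially the same route as the paper: cite the Winter--Zacharias structure theorem for (\ref{C_6208_EqForCPCZ_CPCZ})$\Leftrightarrow$(\ref{C_6208_EqForCPCZ_ConeHm}), use Lemma~\ref{L_6207_ConeRel} to convert the relations in~(\ref{C_6208_EqForCPCZ_UnitRel}) into a unital homomorphism on $(C M_n)^{+}$ whose restriction to $C M_n$ lands in~$A$, and handle~(\ref{C_6208_EqForCPCZ_NoURel}) by elementary spectral observations. The only difference is organizational---the paper runs the cycle (\ref{C_6208_EqForCPCZ_ConeHm})$\Rightarrow$(\ref{C_6208_EqForCPCZ_NoURel})$\Rightarrow$(\ref{C_6208_EqForCPCZ_UnitRel})$\Rightarrow$(\ref{C_6208_EqForCPCZ_ConeHm}) while you prove the adjacent equivalences separately---which changes nothing of substance.
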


\begin{proof}
The equivalence of (\ref{C_6208_EqForCPCZ_CPCZ})
and~(\ref{C_6208_EqForCPCZ_ConeHm})
is Corollary~4.1 of~\cite{WnZc}
(or the proposition in 1.2.2 of~\cite{WinCovDim2}).
That (\ref{C_6208_EqForCPCZ_ConeHm})
implies (\ref{C_6208_EqForCPCZ_NoURel}) is clear,
as is the implication from (\ref{C_6208_EqForCPCZ_NoURel})
to (\ref{C_6208_EqForCPCZ_UnitRel}).
So assume (\ref{C_6208_EqForCPCZ_UnitRel}).
Lemma~\ref{L_6207_ConeRel}
provides a unital homomorphism
$\sm \colon (C M_n)^{+} \to A^{+}$
such that $\sm (t \otimes e_{j, k}) = \ph (e_{j, k})$
for $j, k = 1, 2, \ldots, n$.
Since $\ph (e_{j, k}) \in A$ for $j, k = 1, 2, \ldots, n$,
it follows that $\pi = \sm |_{C M_n}$
is a homomorphism from $C M_n$ to~$A$
such that $\pi (t \otimes e_{j, k}) = \ph (e_{j, k})$
for $j, k = 1, 2, \ldots, n$.
Condition~(\ref{C_6208_EqForCPCZ_ConeHm}) is now immediate.
\end{proof}

\begin{lem}\label{L_6207_cpcZPtb}
For every $\ep > 0$ there is $\dt > 0$ such that the following holds.
Let $A$ be a \ca,
let $B \subset A$ be a subalgebra,
let $n \in \N$,
let $\ph_0 \colon M_n \to A$
be a completely positive contractive order zero map,
and let $x \in B$ satisfy:
\begin{enumerate}
\item\label{L_6207_cpcZPtb_Ino1}
$0 \leq x \leq 1$.
\item\label{L_6207_cpcZPtb_Comm}
With $e_{j,k}$ as in Notation \ref{N_6207_ConeNtn},
we have
$\norm{ [ x, \ph_0 (e_{j, k}) ] } < \dt$ for $j, k = 1, 2, \ldots, n$.
\item\label{L_6207_cpcZPtb_Dist}
$\dist \big( \ph_0 (e_{j, k}) x, \, B \big) < \dt$
for $j, k = 1, 2, \ldots, n$.
\end{enumerate}
Then there is a
completely positive contractive order zero map
$\ph \colon M_n \to B$
such that for all $z \in M_n$ with $\| z \| \leq 1$,
we have $\| \ph_0 (z) x - \ph (z) \| < \ep$.
\end{lem}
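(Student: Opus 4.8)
The plan is to build the desired order zero map $\ph$ by constructing its intended matrix-unit images $y_{j, k} \in B$ directly, exploiting the explicit characterization of completely positive contractive order zero maps in Corollary~\ref{C_6208_EqForCPCZ}. Write $a_{j, k} = \ph_0 (e_{j, k})$. By the implication (\ref{C_6208_EqForCPCZ_CPCZ})$\Rightarrow$(\ref{C_6208_EqForCPCZ_NoURel}) of that corollary, the $a_{j, k}$ satisfy in $A$ the relations (\ref{L_6207_ConeRel_CMid})--(\ref{L_6207_ConeRel_Norm}) of Lemma~\ref{L_6207_ConeRel} together with $a_{j, j} \geq 0$; in particular, via the derived relation (\ref{L_6207_ConeRel_FullNorm}), each $\norm{a_{j, k}} \leq 1$. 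The natural candidates for the cut-down map are the elements $a_{j, k} x$, and by hypothesis~(\ref{L_6207_cpcZPtb_Dist}) we may choose $\tilde b_{j, k} \in B$ with $\norm{ \tilde b_{j, k} - a_{j, k} x } < \dt$.

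First I would check that the family $(\tilde b_{j, k})$ satisfies the order zero relations of Corollary~\ref{C_6208_EqForCPCZ}(\ref{C_6208_EqForCPCZ_NoURel}) up to an error linear in~$\dt$. The point is that $x$ almost commutes with every $a_{j, k}$ by hypothesis~(\ref{L_6207_cpcZPtb_Comm}), so that $(a_{j, k} x)(a_{k, m} x) \approx a_{j, k} a_{k, m} x^2$, and the exact middle relation $a_{j, k} a_{k, m} = a_{j, l} a_{l, m}$ then propagates to $\tilde b_{j, k} \tilde b_{k, m} \approx \tilde b_{j, l} \tilde b_{l, m}$; the same device gives approximate orthogonality $\tilde b_{j, j} \tilde b_{k, k} \approx 0$ for $j \neq k$ from $a_{j, j} a_{k, k} = 0$. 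Approximate self-adjointness $\tilde b_{j, k}^* \approx \tilde b_{k, j}$ follows from $a_{k, j} = a_{j, k}^*$ together with one more commutator estimate, and after replacing $\tilde b_{j, k}$ by $\tfrac{1}{2} (\tilde b_{j, k} + \tilde b_{k, j}^*)$ we may take these exactly self-adjoint. Approximate positivity and the norm bound on the diagonal come from $a_{j, j} x \approx x^{1/2} a_{j, j} x^{1/2}$, for which I would pass from $\norm{ [x, a_{j, j}] } < \dt$ to a small bound on $\norm{ [x^{1/2}, a_{j, j}] }$ exactly as in the proof of Lemma~\ref{L-phZcjCom} (using Lemma 2.5 of~\cite{ArPh}).

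The decisive step is then a quantitative perturbation: the relations of Corollary~\ref{C_6208_EqForCPCZ}(\ref{C_6208_EqForCPCZ_NoURel}) present the cone $C M_n$, which is projective (\cite{Lr}), so these relations are weakly stable with a tolerance $\dt$ depending only on $\ep$ and~$n$, not on the ambient algebra. Applying this stability inside $B$ to the family $(\tilde b_{j, k})$ produces $y_{j, k} \in B$ satisfying the relations exactly and with $\norm{ y_{j, k} - \tilde b_{j, k} }$ as small as we wish; by Corollary~\ref{C_6208_EqForCPCZ}(\ref{C_6208_EqForCPCZ_NoURel})$\Rightarrow$(\ref{C_6208_EqForCPCZ_CPCZ}) they are the matrix-unit images of a completely positive contractive order zero map $\ph \colon M_n \to B$ with $\ph (e_{j, k}) = y_{j, k}$. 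Since $\norm{ y_{j, k} - \ph_0 (e_{j, k}) x } \leq \norm{ y_{j, k} - \tilde b_{j, k} } + \dt$, and since for $\norm{z} \leq 1$ we have $\norm{ \ph_0 (z) x - \ph (z) } \leq \sum_{j, k} \abs{z_{j, k}} \, \norm{ \ph_0 (e_{j, k}) x - y_{j, k} } \leq n^2 \max_{j, k} \norm{ \ph_0 (e_{j, k}) x - y_{j, k} }$, a suitable choice of $\dt$ (in terms of $\ep$ and~$n$) yields the conclusion.

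I expect the main obstacle to be precisely this stability step, and this is where the choice of presentation in Lemma~\ref{L_6207_ConeRel} pays off: because the generators $x_{j, k}$ linearly span the image of the order zero map (unlike the generators of $C M_n$ used in~\cite{WinCovDim2}), the perturbed $y_{j, k}$ directly define $\ph$ with $\ph (e_{j, k}) = y_{j, k}$, and no separate argument is needed to control elements of the range lying only in the generated \ca. Care must also be taken that the preliminary symmetrizations keep the elements in $B$ and degrade the relation errors by only fixed multiples of~$\dt$, so that a single tolerance $\dt$ can be fixed at the end purely in terms of $\ep$ and~$n$.
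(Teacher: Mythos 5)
Your overall strategy is the same as the paper's: approximate the cut-downs $\ph_0 (e_{j, k}) x$ by elements of $B$, check that these approximately satisfy the presenting relations of the cone from Lemma~\ref{L_6207_ConeRel}, and invoke stability of those relations to perturb to exactly satisfying elements, reading off $\ph$ from the perturbed matrix-unit images (this last step being legitimate precisely because of the choice of presentation, as you correctly emphasize). The error bookkeeping and the commutator estimates you sketch are the right ones. But the stability step, which you yourself identify as decisive, has a genuine gap as you have set it up. You apply weak stability to the relation set of Corollary~\ref{C_6208_EqForCPCZ}(\ref{C_6208_EqForCPCZ_NoURel}), which includes the condition $x_{j, j} \geq 0$. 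Positivity is not a norm condition on a $*$-polynomial in the generators, so it is not a relation in the sense of the stability machinery of~\cite{Lr} (Definition 14.1.1 and Theorem 14.1.4), and ``the universal C*-algebra on these relations is projective, hence the relations are weakly stable'' is not a citable implication for this relation set. This is exactly why Lemma~\ref{L_6207_ConeRel} presents $(C M_n)^{+}$ rather than $C M_n$, encoding positivity of $x_{j, j}$ by the two norm conditions $\| x_{j, j} \| \leq 1$ and $\| 1 - x_{j, j} \| \leq 1$: then Theorem 14.1.7 of~\cite{Lr} gives semiprojectivity of $(C M_n)^{+}$ and Theorem 14.1.4 gives stability of the unital relations.

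Repairing the step this way creates a second issue you do not address: the map produced by stability is then a unital homomorphism $(C M_n)^{+} \to B^{+}$, and one must argue that it carries $C M_n$ into $B$ rather than merely into $B^{+}$. For $n \geq 2$ this holds because $C M_n$ admits no nonzero homomorphism to $\C$, but for $n = 1$ it fails (evaluation maps $C_0 ((0, 1]) \to \C$ exist), and the paper accordingly treats $n = 1$ by a separate direct functional-calculus argument; your proposal is silent on $n = 1$. Neither point is fatal to the strategy---both are fixable, and the fixes are precisely what the paper's proof supplies---but as written the decisive step rests on a stability statement that the cited results do not deliver for the relations you chose.
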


\begin{proof}
We first consider the case $n = 1$ separately,
since one step of the argument for $n \geq 2$
doesn't work in this case.

Define a continuous function $f \colon \R \to [0, 1]$
by
\[
f (\ld)
 = \begin{cases}
   0 & \ld \leq 0
        \\
   \ld & 0 \leq \ld \leq 1
       \\
   1 & 1 \leq \ld.
\end{cases}
\]
By a polynomial approximation argument,
there is $\dt_0 > 0$ such that
$\dt_0 \leq \min (1, \ep)$
and whenever
$A$ is a \ca{} and $c, d \in A_{\mathrm{sa}}$ satisfy
\[
\| c \| \leq 2,
\,\,\,\,\,\,
\| d \| \leq 2,
\andeqn
\| c - d \| < \dt_0,
\]
then $\big\| f (c) - f (d) \big\| < \frac{\ep}{2}$.
Use Lemma 2.5 of~\cite{ArPh}
to choose $\dt > 0$ such that
$\dt \leq \frac{\dt_0}{2}$
and whenever
$A$ is a \ca{} and $a, x \in A$ satisfy
\[
\| a \| \leq 1,
\,\,\,\,\,\,
\| x \| \leq 1,
\,\,\,\,\,\,
x \geq 0,
\andeqn
\| [x, a] \| < \dt,
\]
then
$\norm{ \big[ x^{1/2}, a \big] } < \frac{\dt_0}{2}$.

Now let $A$, $B$, $\ph_0 \colon \C \to A$, and $x$
be as in the hypotheses.
Then
\begin{equation}\label{Eq_6207_PhEst}
\bnorm{ \ph_0 (1) x - x^{1/2} \ph_0 (1) x^{1/2} }
 \leq \bnorm{ \big[ x^{1/2}, \, \ph_0 (1) \big] } \bnorm{ x^{1/2} }
 < \frac{\dt_0}{2}
\end{equation}
and
\[
\dist ( \ph_0 (1) x, \, B)
 < \frac{\dt_0}{2}.
\]
These inequalities allow us to choose
$d_0 \in B$ such that
$\norm{ d_0 -  x^{1/2} \ph_0 (1) x^{1/2} } < \dt_0$.
Then $d = \frac{1}{2} (d_0 + d_0^*)$
also satisfies
$\norm{ d -  x^{1/2} \ph_0 (1) x^{1/2} } < \dt_0$.
In particular, $\| d \| \leq 2$.
Since
\[
f \big( x^{1/2} \ph_0 (1) x^{1/2} \big) = x^{1/2} \ph_0 (1) x^{1/2},
\]
the choice of $\dt_0$
implies that
$\norm{ f (d) -  x^{1/2} \ph_0 (1) x^{1/2} } < \frac{\ep}{2}$.
Combining this estimate with~(\ref{Eq_6207_PhEst}) and $\dt_0 \leq \ep$
gives $\| f (d) - \ph_0 (1) x \| < \ep$.
Now we can define $\ph \colon \C \to B$
by $\ph (\ld) = \ld f (d)$ for $\ld \in \C$.

Now assume $n \geq 2$.
For $\dt_0 > 0$ consider the following relations
on elements $y_{j, k}$ in a \ca,
for $j, k = 1, 2, \ldots, n$:
\begin{enumerate}
\item\label{L_6207_SftConeRel_CMid}
$\| y_{j, k} y_{k, m} - y_{j, l} y_{l, m} \| < \dt_0$
for $j, k, l, m = 1, 2, \ldots, n$.
\item\label{L_6207_SftConeRel_Orth}
$\| y_{j, j} y_{k, k} \| < \dt_0$
for $j, k = 1, 2, \ldots, n$
with $j \neq k$.
\item\label{L_6207_SftConeRel_Adj}
$\| y_{j, k} - y_{k, j}^* \| < \dt_0$ for $j, k = 1, 2, \ldots, n$.
\item\label{L_6207_SftConeRel_Norm}
$\| y_{j, j} \| < 1 + \dt_0$ for $j = 1, 2, \ldots, n$.
\item\label{L_6207_SftConeRel_Pos}
$\| 1 - y_{j, j} \| < 1 + \dt_0$ for $j = 1, 2, \ldots, n$.
\end{enumerate}

Since the cone $C M_n$ is projective
(see Theorem 10.2.1 of~\cite{Lr}),
it is semiprojective,
so its unitization $(C M_n)^{+}$ is semiprojective
(by Theorem 14.1.7 of~\cite{Lr}).
It therefore follows from Theorem 14.1.4 of~\cite{Lr}
that the relations in Lemma~\ref{L_6207_ConeRel}
are stable in the sense of Definition 14.1.1 of~\cite{Lr}.
Thus,
there is $\dt_0 > 0$ such that,
whenever
$D$ is a unital \ca{}
and $y_{j, k}$, for $j, k = 1, 2, \ldots, n$,
are elements of $D$ satsfying
(\ref{L_6207_SftConeRel_CMid})--(\ref{L_6207_SftConeRel_Pos}) above,
then there is a unital homomorphism
$\sm \colon (C M_n)^{+} \to D$ such that,
with $f_{j, k}$ as defined in Notation~\ref{N_6207_ConeNtn},
we have
\begin{equation}\label{Eq_6207_CloseHm}
\| \sm (f_{j, k}) - y_{j, k} \| < \frac{\ep}{2 n^2}
\end{equation}
for $j, k = 1, 2, \ldots, n$.
Use Lemma 2.5 of~\cite{ArPh}
to choose $\dt > 0$ such that
\begin{equation}\label{Eq_6209_ChooseDt}
\dt
 \leq \min \left( 1, \, \frac{\dt_0}{8}, \, \frac{\ep}{2 n^2} \right)
\end{equation}
and whenever
$A$ is a \ca{} and $a, x \in A$ satisfy
\[
\| a \| \leq 1,
\,\,\,\,\,\,
\| x \| \leq 1,
\,\,\,\,\,\,
x \geq 0,
\andeqn
\| [x, a] \| < \dt,
\]
then
$\norm{ \big[ x^{1/2}, a \big] } < \frac{\dt_0}{2}$.

Now let $A$, $B$, $\ph_0 \colon M_n \to A$, and $x$
be as in the hypotheses.
For $j, k = 1, 2, \ldots, n$
choose $y_{j, k} \in B$
such that
\begin{equation}\label{Eq_6207_yDef}
\| y_{j, k} - \ph_0 (e_{j, k}) x \| < \dt.
\end{equation}
Then
$\| y_{j, k} \| < 1 + \dt$.
We claim that the relations
(\ref{L_6207_SftConeRel_CMid})--(\ref{L_6207_SftConeRel_Pos}) above
are satisfied in~$B^{+}$.
First,
by Corollary \ref{C_6208_EqForCPCZ}(\ref{C_6208_EqForCPCZ_UnitRel}),
the elements $x_{j, k} = \ph_0 (e_{j, k})$
satisfy the relations
(\ref{L_6207_ConeRel_CMid})--(\ref{L_6207_ConeRel_Pos})
of Lemma~\ref{L_6207_ConeRel} as elements of the \ca~$A^+$.
We now verify~(\ref{L_6207_SftConeRel_CMid}).
We have,
using (\ref{Eq_6207_yDef}) and
\[
\| \ph_0 (e_{j, k}) x \| \leq 1,
\,\,\,\,\,\,
\| y_{j, k} \| < 1 + \dt,
\andeqn
\| y_{j, l} \| < 1 + \dt
\]
at the first step,
\begin{align*}
& \| y_{j, k} y_{k, m} - y_{j, l} y_{l, m} \|
\\
& \hspace*{3em} {\mbox{}}
  < 2 \dt (1 + \dt) + 2 \dt
      + \big\| \ph_0 (e_{j, k}) x \ph_0 (e_{k, m}) x
              -\ph_0 (e_{j, l}) x \ph_0 (e_{l, m}) x \big\|
\\
& \hspace*{3em} {\mbox{}}
  \leq 2 \dt (2 + \dt)
    + \| [ \ph_0 (e_{k, m}), x] \| + \| [ \ph_0 (e_{l, m}), x] \|
\\
& \hspace*{6em} {\mbox{}}
    + \big\| \ph_0 (e_{j, k}) \ph_0 (e_{k, m}) x^2
      - \ph_0 (e_{j, l}) \ph_0 (e_{l, m}) x^2 \big\|.
\end{align*}
The last term in the last expression is zero
by condition~(\ref{L_6207_ConeRel_CMid}) in Lemma~\ref{L_6207_ConeRel},
and,
using hypothesis~(\ref{L_6207_cpcZPtb_Comm}) at the first step
and~(\ref{Eq_6209_ChooseDt}) at the second and third steps,
\[
2 \dt (2 + \dt)
   + \| [ \ph_0 (e_{k, m}), x] \| + \| [ \ph_0 (e_{l, m}), x] \|
 < 2 \dt (2 + \dt)  + 2 \dt
 \leq 8 \dt
 \leq \dt_0.
\]
Thus~(\ref{L_6207_SftConeRel_CMid}) holds.
Similarly,
using, in order,
(\ref{L_6207_ConeRel_Orth}),
(\ref{L_6207_ConeRel_Adj}),
and~(\ref{L_6207_ConeRel_Norm})
in Lemma~\ref{L_6207_ConeRel},
for $j, k = 1, 2, \ldots, n$
with $j \neq k$
we have
\begin{align*}
\| y_{j, j} y_{k, k} \|
& < \dt (1 + \dt) + \dt + \| \ph_0 (e_{j, j}) x \ph_0 (e_{k, k}) x \|
 \\
& < \dt (1 + \dt) + \dt + \dt
     + \bnorm{ \ph_0 (e_{j, j}) \ph_0 (e_{k, k}) x^2 }
  = \dt (3 + \dt)
  \leq 4 \dt
  < \dt_0,
\end{align*}
for $j, k = 1, 2, \ldots, n$
we have
\[
\| y_{j, k} - y_{k, j}^* \|
  < 2 \dt + \| \ph_0 (e_{j, k}) x - x \ph_0 (e_{k, j})^* \|
  < 3 \dt + \| \ph_0 (e_{j, k}) x - \ph_0 (e_{k, j})^* x \|
  < \dt_0,
\]
and for $j = 1, 2, \ldots, n$
we have
\[
\| y_{j, j} \|
 < \dt + \| \ph_0 (e_{j, j}) x \|
 \leq 1 + \dt
 < 1 + \dt_0.
\]
Finally, for $j = 1, 2, \ldots, n$,
the choice of $\dt$ and hypothesis~(\ref{L_6207_cpcZPtb_Comm})
imply that
$\bnorm{ \big[ x^{1/2}, \, \ph_0 (x_{j, j}) \big] } < \frac{\dt_0}{2}$,
so
\begin{align*}
\bnorm{ y_{j, j} - x^{1/2} \ph_0 (x_{j, j}) x^{1/2} }
& \leq \norm{ y_{j, j} - \ph_0 (x_{j, j}) x }
    + \bnorm{ \big[ x^{1/2}, \, \ph_0 (x_{j, j}) \big] }
            \bnorm{ x^{1/2} }
\\
& < \dt + \frac{\dt_0}{2}
  \leq \dt_0.
\end{align*}
Since $0 \leq x^{1/2} \ph_0 (x_{j, j}) x^{1/2} \leq 1$,
we have
\[
\norm {y_{j, j} } \leq 1 + \dt_0
\andeqn
\bnorm{ 1 - x^{1/2} \ph_0 (x_{j, j}) x^{1/2} } \leq 1.
\]
Therefore
$\| 1 - y_{j, j} \| < 1 + \dt_0$.
This completes the verification of
(\ref{L_6207_SftConeRel_CMid})--(\ref{L_6207_SftConeRel_Pos}).

By the choice of~$\dt_0$,
there is a unital homomorphism
$\sm \colon (C M_n)^{+} \to B^{+}$ such that
(\ref{Eq_6207_CloseHm}) holds
for $j, k = 1, 2, \ldots, n$.
Since $n \geq 2$,
there are no nonzero homomorphisms
from $C M_n$ to~$\C$.
It follows that
the formula $\ph (z) = \sm (t \otimes z)$,
for $z \in M_n$,
defines a completely positive contractive order zero map
from $M_n$ to~$B$.
For $j, k = 1, 2, \ldots, n$,
we have,
using (\ref{Eq_6207_CloseHm})
and~(\ref{Eq_6207_yDef}) at the second step,
and~(\ref{Eq_6209_ChooseDt}) at the third step,
\[
\norm{ \ph (e_{j, k}) - \ph_0 (e_{j, k}) x }
  \leq \norm{ \ph (e_{j, k}) - y_{j, k} }
       + \norm{ y_{j, k} - \ph_0 (e_{j, k}) x }
  < \frac{\ep}{2 n^2} + \dt
  \leq \frac{\ep}{n^2}.
\]
Now let $z \in M_n$ satisfy $\| z \| \leq 1$.
Choose $\ld_{j, k} \in \C$
for $j, k = 1, 2, \ldots, n$
such that $z = \sum_{j, k = 1}^n \ld_{j, k} e_{j, k}$.
One easily sees that $| \ld_{j, k} | \leq 1$
for $j, k = 1, 2, \ldots, n$.
So
\[
\| \ph (z) - \ph_0 (z) x \|
  \leq \sum_{j, k = 1}^n
     | \ld_{j, k} | \norm{ \ph (e_{j, k}) - \ph_0 (e_{j, k}) x }
  < n^2 \left( \frac{\ep}{n^2} \right)
  = \ep.
\]
This completes the proof.
\end{proof}

For unital \ca{s},
we strengthen Lemma~\ref{L_6207_cpcZPtb}
by adding a condition to the conclusion,
as follows.

\begin{lem}\label{L_5714_MainPerturb}
For every $\ep > 0$ and $n \in \N$, there is $\dt > 0$ such
that the following holds.
Whenever $A$, $B$, $\ph_{0} \colon M_{n} \to A$, and $x \in B$ satisfy
the conditions in Lemma~\ref{L_6207_cpcZPtb},
and in addition $A$ is unital and $B$ contains the identity of~$A$,
there exists a completely positive contractive order zero map
$\ph \colon M_{n} \to B$
such that:
\begin{enumerate}
\item\label{L_5714_MainPerturb_Dist}
$\norm{\ph (z) - \ph_0 (z) x} < \ep$ for all $z \in M_{n}$ with
$\norm{z} \leq 1$.
\item\label{L_5714_MainPerturb_Cuntz}
$1 - \ph (1) \cuntz_A (1 - x) \oplus [1 - \ph_0 (1)]$.
\end{enumerate}
\end{lem}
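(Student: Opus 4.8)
The plan is to first produce, via Lemma~\ref{L_6207_cpcZPtb}, a completely positive contractive order zero map $\ph_1 \colon M_n \to B$ with $\norm{\ph_1(z) - \ph_0(z) x}$ small, and then to \emph{modify} $\ph_1$ by an order zero functional calculus so as to force the extra condition~(\ref{L_5714_MainPerturb_Cuntz}). A naive attempt using only $\ph_1$ fails: taking $A = \C$, $\ph_0(1) = x = 1$, and $\ph_1(1) = 1 - \tfrac{\eta}{2}$ shows that $1 - \ph_1(1)$ need not be Cuntz below $(1-x) \oplus [1 - \ph_0(1)] = 0$, no matter how small $\eta$ is. So the heart of the matter is to arrange that $\ph(1) = 1$ wherever $\ph_0(1) x \approx 1$.

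Set $h_0 = \ph_0(1)$ and $w = h_0^{1/2} x h_0^{1/2} \ge 0$. Choosing $\dt$ small I would arrange, using Lemma 2.5 of~\cite{ArPh} applied to $\norm{[x, h_0]}$ (which is controlled by hypothesis~(\ref{L_6207_cpcZPtb_Comm}) summed over the diagonal), that $\kp := \norm{[h_0^{1/2}, x]}$ is as small as desired; then $\norm{\ph_1(1) - w} \le \norm{\ph_1(1) - h_0 x} + \kp =: \eta$ is small. Fix $\bt$ with $\eta < \bt$ and $\tfrac{\bt}{1 - \bt}$ small, let $g \colon [0,1] \to [0,1]$ be $g(\ld) = \min\!\big(1, \tfrac{\ld}{1-\bt}\big)$, and let $\pi \colon C M_n \to B$ be the supporting homomorphism of $\ph_1$ furnished by Corollary~\ref{C_6208_EqForCPCZ}. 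I define $\ph(z) = \pi(g(t) \otimes z)$; this is again completely positive contractive order zero into $B$, and $\ph(1) = g(\ph_1(1))$. Condition~(\ref{L_5714_MainPerturb_Dist}) then follows from the triangle inequality, since $\norm{\ph(z) - \ph_1(z)} \le \norm{g(t) - t}_\infty \norm{z} \le \tfrac{\bt}{1-\bt}$ while $\norm{\ph_1(z) - \ph_0(z) x}$ is small; I balance $\ep_1$ (the error fed to Lemma~\ref{L_6207_cpcZPtb}), $\kp$, and $\bt$ so that the total stays below $\ep$.

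For~(\ref{L_5714_MainPerturb_Cuntz}), write $q = 1 - g$, so that $1 - \ph(1) = q(\ph_1(1))$ and $q$ vanishes on $[1-\bt, 1]$; hence $1 - \ph(1) \sim \big( (1-\bt) - \ph_1(1) \big)_+$, since these have the same zero set in the functional calculus of $\ph_1(1)$. Because $\ph_1(1) \ge w - \eta$, the self-adjoint estimate $(1-\bt) - \ph_1(1) \le (1 - w) - (\bt - \eta)$, combined with the standard fact that $s \le t$ implies $s_+ \cuntz t_+$ (Section~1 of~\cite{PhLarge}), gives $1 - \ph(1) \cuntz \big( (1 - w) - (\bt - \eta) \big)_+ \cuntz 1 - w$. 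Finally I use the exact identity
\[
1 - w = (1 - h_0) + h_0^{1/2}(1 - x) h_0^{1/2},
\]
a sum of positive elements, to get $1 - w \cuntz (1 - h_0) \oplus h_0^{1/2}(1-x)h_0^{1/2}$; writing $b = (1-x)^{1/2} h_0^{1/2}$ we have $h_0^{1/2}(1-x)h_0^{1/2} = b^* b \sim b b^* \le 1 - x$, so the right side is $\cuntz (1 - \ph_0(1)) \oplus (1 - x)$, which is exactly~(\ref{L_5714_MainPerturb_Cuntz}).

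The main obstacle is precisely the failure of the naive perturbation argument illustrated above: norm-closeness of $\ph(1)$ to $\ph_0(1) x$ yields only $\big(1 - \ph(1) - \eta\big)_+ \cuntz (1-x) \oplus [1 - \ph_0(1)]$, and for a fixed map one cannot remove this cut-down to reach the statement with $1 - \ph(1)$ itself. The device that resolves this is the order zero functional calculus by $g$: it pushes the top of the spectrum of $\ph_1(1)$ up to $1$, thereby inserting a genuine gap into the support of $1 - \ph(1)$, and this gap is what allows the order comparison $s \le t \Rightarrow s_+ \cuntz t_+$ to absorb the error $\eta$ without leaving a residual cut-down.
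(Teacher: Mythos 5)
Your proposal is correct and follows essentially the same route as the paper: produce $\ph_1$ via Lemma~\ref{L_6207_cpcZPtb}, then apply the order zero functional calculus with the function $\ld \mapsto \min\big(1, \ld/(1-\bt)\big)$ (the paper uses exactly this $f$ with $\bt = 2\ep_0$) to force $1 - \ph(1) \sim \big(1 - \ph_1(1) - \bt\big)_+$, and finish with the same orthogonal decomposition into $(1-x)$ plus a conjugated copy of $1 - \ph_0(1)$ followed by $b^*b \sim b b^*$. The only cosmetic differences are that you compare against $\ph_0(1)^{1/2} x\, \ph_0(1)^{1/2}$ rather than $x^{1/2}\ph_0(1)x^{1/2}$ and absorb the error via the order inequality $s \le t \Rightarrow s_+ \cuntz t_+$ where the paper uses the norm estimate $\big(a - \norm{a-b}\big)_+ \cuntz b$.
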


\begin{proof}
Set $\ep_0 = \min \big( \frac{1}{3}, \, \frac{\ep}{3} \big)$.
Use Lemma 2.5 of~\cite{ArPh}
to choose $\dt_0 > 0$ such that whenever
$A$ is a \ca{} and $a, x \in A$ satisfy
\[
\norm{a} \leq 1,
\,\,\,\,\,\,
\norm{x} \leq 1,
\,\,\,\,\,\,
x \geq 0,
\andeqn
\norm{ [x, a] } < \dt_0,
\]
then
$\norm{ \big[ x^{1/2}, a \big] } < \ep_0$.
Apply Lemma~\ref{L_6207_cpcZPtb}
with $\min (\ep_0, \dt_0)$ in place of~$\ep$,
getting $\dt_1 > 0$.
Set $\dt = \min (\dt_0, \dt_1)$.

Now let $A$ be a \uca,
and let $B \subset A$,
$\ph_{0} \colon M_{n} \to A$, and $x \in B$ be as in the hypotheses.
The choice of $\dt_1$ using Lemma~\ref{L_6207_cpcZPtb}
gives us a
completely positive contractive order zero map
$\ph_1 \colon M_{n} \to B$
such that
\begin{equation}\label{Eq_6208_Conq}
\norm{\ph_1 (z) - \ph_0 (z) x} < \min (\ep_0, \dt_0)
\end{equation}
for all $z \in M_{n}$ with
$\norm{z} \leq 1$.
The conditions on $\ph_0$
imply that $\norm{ [\ph_{0} (1), \, x] } < \dt_0$,
so by the choice of
$\dt_0$ we have $\bnorm{ \big[ \ph_{0} (1), \, x^{1/2} \big] } < \ep_0$.
Combining this estimate with
the case $z = 1$ of~(\ref{Eq_6208_Conq}),
we get
\begin{equation}\label{Eq_5714_half}
\bnorm{ \ph_1 (1) - x^{1/2} \ph_0 (1) x^{1/2}} < 2 \ep_0.
\end{equation}

Define a continuous function $f \colon [0, 1] \to [0, 1]$
by
\[
f (\ld)
 = \begin{cases}
   (1 - 2 \ep_0)^{-1} \ld & 0 \leq \ld \leq 1 - 2 \ep_0
       \\
   1                      & 1 - 2 \ep_0 \leq \ld \leq 1.
\end{cases}
\]
Following the functional calculus for
completely positive order zero maps
in Corollary~4.2 of~\cite{WnZc},
define a completely positive contractive order zero map
$\ph \colon M_{n} \to B$
by $\ph = f (\ph_1)$.

We verify part~(\ref{L_5714_MainPerturb_Dist}) of the conclusion.
Let $C \subset B$ be the \ca{} generated by $\ph_1 (M_{n})$.
Theorem~3.3 of~\cite{WnZc}
provides a homomorphism $\pi \colon M_n \to M (C)$
(the multiplier algebra of~$C$)
whose range commutes with $\ph_1 (1)$
and such that
\begin{equation}\label{Eq_5713_ph1zz}
\ph_1 (z) = \pi (z) \ph_1 (1)
\end{equation}
for all $z \in M_{n}$.
Therefore $\pi (1) \ph_1 (1) = \ph_1 (1)$.
For any continuous function $g \colon [0, \infty) \to \C$
with $g (0) = 0$,
approximation by polynomials with no constant term gives
\begin{equation}\label{Eq_5713_UnitForph1}
g (\ph_1 (1)) = \pi (1) g (\ph_1 (1)).
\end{equation}
By definition
(see Corollary~4.2 of~\cite{WnZc}),
we have
\begin{equation}\label{Eq_5713_phOfz}
\ph (z) = \pi (z) f (\ph_1 (1) )
\end{equation}
for all $z \in M_{n}$.
In particular,
$\ph (1) = \pi (1) f (\ph_1 (1) )$,
so~(\ref{Eq_5713_UnitForph1})
implies that
$\ph (1) = f (\ph_1 (1) )$.
Since $| f (\ld) - \ld | \leq 2 \ep_0$ for all $\ld \in [0, 1]$,
we have
$\norm{\ph_1 (1) - f (\ph_1 (1) )} \leq 2 \ep_0$.
Combining this estimate with
(\ref{Eq_5713_ph1zz}) and~(\ref{Eq_5713_phOfz})
gives
$\norm{\ph (z) - \ph_1 (z)} \leq 2 \ep_0 \norm{z}$
for all $z \in M_{n}$.
Using~(\ref{Eq_6208_Conq}),
we now get
$\norm{\ph (z) - \ph_0 (z) x} < 3 \ep_0 \leq \ep$
for all $z \in M_{n}$ with
$\norm{z} \leq 1$,
as desired.

It remains to verify part~(\ref{L_5714_MainPerturb_Cuntz})
of the conclusion.
For $\ld \in [0, 1]$,
we have
\[
1 - f (\ld)
 = (1 - 2 \ep_0)^{-1} \max (0, \, 1 - \ld - 2 \ep_0).
\]
Since $\ph (1) = f (\ph_1 (1)$, it follows that
\begin{equation}\label{Eq_5714_Sub2ep0}
1 - \ph (1)
 \sim_A ( 1 - \ph_1 (1) - 2 \ep_0 )_{+}.
\end{equation}
By~(\ref{Eq_5714_half}),
we have
\[
\bnorm{ [1 - \ph_1 (1)] - \big[ 1 - x^{1/2} \ph_0 (1) x^{1/2} \big] }
 < 2 \ep_0,
\]
whence
\begin{equation}\label{Eq_5714_ToHalf}
( 1 - \ph (1) - 2 \ep_0 )_{+} \cuntz_A 1 - x^{1/2} \ph_0 (1) x^{1/2}.
\end{equation}
Moreover,
using Lemma 1.4(4) of~\cite{PhLarge}
at the third step,
we get
\begin{align*}
1 - x^{1/2} \ph_0 (1) x^{1/2}
& = 1 - x + x^{1/2} [1 - \ph_0 (1) ] x^{1/2}
\\
& \cuntz_A (1 - x) \oplus x^{1/2} [1 - \ph_0 (1) ] x^{1/2}
\\
& \sim_A (1 - x) \oplus [1 - \ph_0 (1) ]^{1/2} x [1 - \ph_0 (1) ]^{1/2}
\\
& \leq (1 - x) \oplus [1 - \ph_0 (1) ].
\end{align*}
Combining this result with
(\ref{Eq_5714_Sub2ep0}) and (\ref{Eq_5714_ToHalf}) gives
\[
1 - \ph (1) \cuntz_A (1 - x) \oplus [1 - \ph_0 (1) ],
\]
as desired.
\end{proof}

\section{Tracial $\JS$-Absorption}\label{Sec_TZAbs}

In this section, we prove our main results.

The following definition first appeared in~\cite{HirOr2013}.

\begin{dfn}[Definition~2.1 of~\cite{HirOr2013}]\label{D_TracialZAbs}
Let $A$ be a unital \ca.
We say that $A$ is
{\emph{tracially $\JS$-absorbing}} if $A \not \cong \C$
and for any $\ep > 0$, any finite set $F \subset A$,
any $n \in \N$, and any $x \in A_{+} \setminus \set{0}$,
there is a completely positive contractive order zero map
$\ph \colon M_{n} \to A$ such that:
\begin{enumerate}
\item\label{D_TracialZAbs_Small}
$1 - \ph (1) \cuntz_{A} x$.
\item\label{D_TracialZAbs_AppCm}
For any $y \in M_{n}$ with $\norm{y} = 1$ and any $a \in F$,
we have $\norm{\ph (y) a - a \ph (y) } < \ep$.
\end{enumerate}
\end{dfn}

\begin{thm}\label{tZabsGoesUp}
Let $A$ be a simple infinite dimensional \uca,
and let $B$ be a centrally large subalgebra of~$A$.
If $B$ is tracially $\JS$-absorbing,
then $A$ is tracially $\JS$-absorbing.
\end{thm}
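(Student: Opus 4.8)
The plan is to transfer the tracial $\JS$-absorption witness $\ph_0 \colon M_n \to B$ over to a witness $\ph \colon M_n \to A$, by first recognizing that the containment goes the wrong way for a direct application of the perturbation lemmas: we have $B \subset A$, but Lemma~\ref{L_5714_MainPerturb} manufactures a map into~$B$ from a map into~$A$. So the real content is to produce, for a given finite set $F \subset A$, an order zero map into $A$ that approximately commutes with $F$ and has small complement, using the hypothesis about maps into~$B$. The mechanism linking the two algebras is Lemma~\ref{L_9430_LgTower} (with $N = 1$) together with the commutator estimate Lemma~\ref{L-phZcjCom}. The strategy is: take the given data $\ep$, $F$, $n$, $x$ for~$A$; use central largeness to find $c_j \in B$ approximating the $a_j \in F$ and a tower $g_0 \geq g_1$ in~$B$; apply tracial $\JS$-absorption \emph{inside}~$B$ to the finite set of cut-down elements $\{(1-g_1) c_j\}$ together with a suitable positive element controlling the Cuntz size; and then compress the resulting order zero map $\ph_0 \colon M_n \to B$ by $(1 - g_0)^{1/2}$ to land back in~$A$ with the correct commutation relations against the original $a_j$.

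\medskip

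\textbf{Step 1 (setup via the tower).} Given $\ep > 0$, finite $F = \{a_1, \ldots, a_m\} \subset A$ with $\norm{a_j} \leq 1$, $n \in \N$, and $x \in A_+ \setminus \{0\}$, first fix the $\dt$ from Lemma~\ref{L-phZcjCom} for this~$\ep$. Then invoke Lemma~\ref{L_9430_LgTower} with $N = 1$, the elements $a_1, \ldots, a_m$, a small tolerance $\ep'$ (to be determined), and some $y \in B_+ \setminus \{0\}$ with $y \cuntz_A x$; this produces $c_1, \ldots, c_m \in A$ and $g_0, g_1 \in B$ satisfying $0 \leq g_1 \leq g_0 \leq 1$, $g_0 g_1 = g_1$, $\norm{c_j - a_j} < \ep'$, $(1 - g_1) c_j \in B$, the commutation bounds $\norm{[g_n, a_j]} < \ep'$, and $g_0 \cuntz_B y$, $g_0 \cuntz_A x$, together with $\norm{(1 - g_0) x (1 - g_0)} > 1 - \ep'$.

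\medskip

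\textbf{Step 2 (absorption inside $B$ and compression back into $A$).} Apply tracial $\JS$-absorption of~$B$ to the finite set $\{(1 - g_1) c_j \colon j = 1, \ldots, m\} \cup \{g_0\} \subset B$, the integer~$n$, a positive element $y' \in B_+ \setminus \{0\}$ with $y' \cuntz_B y$, and a small tolerance. This yields a completely positive contractive order zero map $\ph_0 \colon M_n \to B$ with $1 - \ph_0(1) \cuntz_B y'$ and $\norm{[\ph_0(z), (1 - g_1)c_j]}$, $\norm{[\ph_0(z), g_0]}$ small for $\norm{z} \leq 1$. Now define $\ph \colon M_n \to A$ by the compression $\ph(z) = (1 - g_0)^{1/2} \ph_0(z) (1 - g_0)^{1/2}$; one checks (via the order zero functional calculus, Corollary~4.2 of~\cite{WnZc}, exactly as in the proof of Lemma~\ref{L_5714_MainPerturb}) that this is again a completely positive contractive order zero map. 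For each $z \in M_n$ with $\norm{z} \leq 1$, the hypotheses of Lemma~\ref{L-phZcjCom} hold with $z \rightsquigarrow \ph(z)$, $z_0 \rightsquigarrow \ph_0(z)$, and $c \rightsquigarrow c_j$ (condition~(\ref{L_5715_CommEst_Dist}) is the definition of~$\ph$, and conditions~(\ref{L_5715_CommEst_Commg2c}) and~(\ref{L_5715_CommEst_CommG0}) are the commutators produced inside~$B$), giving $\norm{[\ph(z), c_j]} < \ep$, hence $\norm{[\ph(z), a_j]} < \ep + 2\ep'$ against the original elements.

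\medskip

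\textbf{Step 3 (the Cuntz estimate).} The remaining task is condition~(\ref{D_TracialZAbs_Small}), namely $1 - \ph(1) \cuntz_A x$. Here $1 - \ph(1) = 1 - (1 - g_0)^{1/2} \ph_0(1) (1 - g_0)^{1/2} = g_0 + (1 - g_0)^{1/2}[1 - \ph_0(1)](1 - g_0)^{1/2}$, which Cuntz-dominates in~$A$ by $g_0 \oplus [1 - \ph_0(1)]$ (using Lemma~1.4(4) of~\cite{PhLarge} as in Lemma~\ref{L_5714_MainPerturb}). Since $g_0 \cuntz_A x$ and $1 - \ph_0(1) \cuntz_B y' \cuntz_B y \cuntz_A x$, both summands are dominated by~$x$; the passage from a direct sum of two copies dominated by~$x$ back to~$x$ itself requires that $x$ is ``Cuntz-divisible enough,'' which is where I must be careful: one arranges this by choosing $y$ and $x$ from the start so that $g_0 \oplus [\text{small}] \cuntz_A x$, e.g.\ by splitting $x$ into two orthogonal pieces using simplicity of~$A$ and strict comparison-free Cuntz manipulations available in the purely positive part of the Cuntz semigroup (Section~1 of~\cite{PhLarge}). \textbf{I expect this last step to be the main obstacle:} the commutator transfer via Lemma~\ref{L-phZcjCom} is essentially bookkeeping, but controlling the Cuntz class of $1 - \ph(1)$ in~$A$ — reconciling the two separate domination statements (one coming from the tower element $g_0$, one from the absorption complement inside~$B$) into a single subordination to the prescribed $x \in A_+$ — is the delicate point, and it is precisely where the hypothesis that $B$ is \emph{centrally} large (furnishing the approximately central tower $g_0 \geq g_1$ with $g_0 \cuntz_A x$) does the essential work.
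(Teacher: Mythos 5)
Your overall architecture matches the paper's: split the problem with the tower $g_0 \geq g_1$ from Lemma~\ref{L_9430_LgTower}, run tracial $\JS$-absorption inside $B$ against the cut-down set, and transfer the commutator estimates back to the $a_j$ via Lemma~\ref{L-phZcjCom}. But there is a genuine gap in Step~2: the compression $\ph (z) = (1 - g_0)^{1/2} \ph_0 (z) (1 - g_0)^{1/2}$ is \emph{not} a completely positive contractive order zero map. Writing $\ph_0 (z) = \pi (z) \ph_0 (1)$ as in Theorem~3.3 of~\cite{WnZc}, orthogonality of $\ph (z)$ and $\ph (w)$ for $z w = 0$ would require $1 - g_0$ to commute exactly with the range of $\ph_0$, whereas you only have approximate commutation; the functional calculus of Corollary~4.2 of~\cite{WnZc} applies functions to an order zero map and does not repair a compression. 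You dismissed Lemma~\ref{L_5714_MainPerturb} on the grounds that it "manufactures a map into $B$ from a map into $A$," but this misreads its role: the paper applies it with both algebras taken equal to $B$ and with $x = 1 - g_0$, precisely to replace the (only approximately order zero) element $z \mapsto \ph_0 (z) (1 - g_0)$ by an honest order zero map $\ph$ within $\dt_1$ of it, and simultaneously to obtain the Cuntz estimate $1 - \ph (1) \cuntz_B g_0 \oplus [1 - \ph_0 (1)]$. That perturbation (resting on semiprojectivity of $(C M_n)^+$ via Lemma~\ref{L_6207_cpcZPtb}) is the missing ingredient; without it your $\ph$ does not satisfy Definition~\ref{D_TracialZAbs}.

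On Step~3, you correctly identified the reconciliation of the two Cuntz dominations as delicate, and your proposed fix (splitting $x$ into orthogonal Cuntz-equivalent pieces) is exactly what the paper does via Lemma~2.4 of~\cite{PhLarge}, obtaining $x_1 \sim x_2$ with $x_1 x_2 = 0$ and $x_1 + x_2 \in {\overline{x A x}}$, then arranging $g_0 \cuntz_A x_1$ from the tower lemma. What your sketch still lacks is the element $b \in B_{+} \setminus \{0\}$ with $b \cuntz_A x_2$, supplied by Lemma~5.3 of~\cite{PhLarge}; this is the step that lets you feed a Cuntz-smallness target living in $A$ into the tracial $\JS$-absorption hypothesis for $B$ (your "$y' \cuntz_B y$ with $y \cuntz_A x$" gestures at this but does not produce such a $b$). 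With $1 - \ph_0 (1) \cuntz_B b$ one then gets $1 - \ph (1) \cuntz_B g_0 \oplus [1 - \ph_0 (1)] \cuntz_A x_1 \oplus x_2 \cuntz_A x$ as required.
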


We don't need to assume that $A$ is finite.
Theorem~3.3 of~\cite{HirOr2013}
shows that if $B$ is tracially $\JS$-absorbing,
then $B$ has strict comparison of positive elements,
from which it follows
that $B$ either has a normalized quasitrace or is purely infinite.
In the first case,
$B$ is finite,
so $A$ is also finite
(Proposition~6.15 of~\cite{PhLarge}).
In the second case,
every purely infinite simple unital \ca{}
is tracially $\JS$-absorbing,
as one sees by taking $\ph = 0$
in Definition~\ref{D_TracialZAbs},
and if $B$ is purely infinite then so is~$A$
by Proposition~6.17 of~\cite{PhLarge}.

\begin{proof}[Proof of Theorem~\ref{tZabsGoesUp}]
We verify the conditions in Definition~\ref{D_TracialZAbs}.
So let $\ep > 0$, let $F \subset A$ be a finite set,
let $x \in A_{+} \setminus \{ 0 \}$, and let $n \in \N$.
Write $F = \{ a_{1}, a_2, \ldots, a_{m} \}$.
Without loss of generality,
we may assume that $\norm{a_{j}} \leq 1$ for $j = 1, 2, \ldots, m$.
By Lemma~2.4 of~\cite{PhLarge},
there exist $x_{1}, x_{2} \in A_{+} \setminus \set{0}$ such that
\begin{equation}\label{Eq_5715_Getx1x2}
x_{1} \sim x_{2},
\,\,\,\,\,\,
x_{1} x_{2} = 0,
\andeqn
x_{1} + x_{2} \in {\overline{x A x}}.
\end{equation}
We may clearly assume that $\norm{x_{1}} = \norm{x_{2}} = 1$.

Apply Lemma~\ref{L-phZcjCom} with $\frac{\ep}{3}$ in
place of $\ep$,
obtaining $\dt_0 > 0$.
Set
$\dt_1 = \min \big( \frac{\ep}{3}, \frac{\dt_0}{3} \big) > 0$.
Apply Lemma~\ref{L_9430_LgTower}
with $\dt_1$ in place of~$\ep$,
with $a_{1}, a_2, \ldots, a_{m}$ as given,
with $x_{1} \in A_{+} \setminus \set{0}$ in place of~$x$,
with $1_{A} \in B_{+} \setminus \set{0}$ in place of~$y$,
and with $N = 1$,
to obtain $c_{1}, c_2,  \ldots, c_{m} \in A$
and $g_{0}, g_{1} \in B$
such that:
\begin{enumerate}
\item\label{tZabsGoesUp_1}
$0 \leq g_{1} \leq g_0 \leq 1$
and $g_{0} g_{1} = g_{1}$.
\item\label{tZabsGoesUp_2}
$\norm{c_{j}} \leq \norm{a_{j}}$ for $j = 1, 2, \ldots, m$.
\item\label{tZabsGoesUp_3}
$\norm{c_{j} - a_{j}} < \dt_1$ for $j = 1, 2, \ldots, m$.
\item\label{tZabsGoesUp_4}
$(1 - g_{0}) c_{j}, \, (1 - g_{1}) c_{j} \in B$ for
$j = 1, 2, \ldots, m$.
\item\label{tZabsGoesUp_5}
$g_{k} \cuntz_{B} 1_{A}$ and $g_{k} \cuntz_{A} x_{1}$ for
$k = 0, 1$.
\item\label{tZabsGoesUp_7}
$\norm{ g_{0} a_{j} - a_{j} g_{0} } < \dt_1$
and $\norm{ g_{1} a_{j} - a_{j} g_{1} } < \dt_1$
for $j = 1, 2, \ldots, m$.
\setcounter{TmpEnumi}{\value{enumi}}
\end{enumerate}

Apply Lemma 5.3 of~\cite{PhLarge}
with $r = 1_{A}$
to obtain $b \in B_{+} \setminus \set{0}$
satisfying
\begin{equation}\label{Eq_5715_Getb}
b \cuntz_{A} x_{2}.
\end{equation}
(We do not use condition (3) of that lemma,
and we only use
condition (1) of that lemma
to guarantee that $b \neq 0$.)
Apply
Lemma~\ref{L_5714_MainPerturb}
with $n$ as given and
with $\dt_1$
in place of~$\ep$;
let $\dt_2 > 0$
be the resulting strictly positive number.
Set $\dt_3 = \min (\dt_1, \dt_2) > 0$ and
\[
S = \bset{g_{0}, \, 1 - g_{0}, \, (1 - g_{0})^{1/2} }
 \cup \bset{ (1 - g_{1}) c_{j} \colon j = 1, 2, \ldots, m} \subset B.
\]
Since $B$ is tracially $\JS$-absorbing,
there is a completely positive contractive order zero map
$\ph_{0} \colon M_{n} \to B$ such that:
\begin{enumerate}
\setcounter{enumi}{\value{TmpEnumi}}
\item\label{TZA_1}
$1_{A} - \ph_{0}(1) \cuntz_{B} b$.
\item\label{TZA_2}
$\norm{ [\ph_{0} (z), \, y] } < \dt_3$
for all $z \in M_{n}$ with $\norm{z} \leq 1$
and all $y \in S$.
\end{enumerate}

For $z \in M_n$ with $\| z \| \leq 1$,
we have $\norm{ [\ph_{0} (z), \, 1 - g_{0}] } < \dt_3 \leq \dt_2$.
We apply the choice of $\dt_2$ using Lemma~\ref{L_5714_MainPerturb}
with $1 - g_{0}$ in place of~$x$
and taking $A$ and $B$ there to
be both equal to~$B$.
We get a completely positive contractive order zero
map $\ph \colon M_{n} \to B$
(which we regard as a map to~$A$)
such that
$\norm{ \ph (z) - \ph_{0} (z) (1 - g_{0}) } < \dt_1$
for all $z \in M_{n}$ with $\norm{z} \leq 1$
and such that
\begin{equation}\label{Eq_5715_Domg0b}
1 - \ph (1) \cuntz_B g_0 \oplus [1 - \ph_0 (1)].
\end{equation}
For any such $z$, we compute,
using~(\ref{TZA_2}) and the definition of~$S$ at the third step,
\begin{align*}
& \bnorm{ \ph (z) - (1 - g_{0})^{1/2} \ph_{0} (z) (1 - g_{0})^{1/2} }
\\
& \hspace*{3em} {\mbox{}}
 \leq \norm{ \ph (z) - \ph_{0} (z) (1 - g_{0}) }
  + \bnorm{ \ph_{0} (z) (1 - g_{0}) - (1 - g_{0})^{1/2} \ph_{0} (z)
         (1 - g_{0})^{1/2} }
\\
& \hspace*{3em} {\mbox{}}
 < \dt_1 + \bnorm{ \big[ \ph_{0} (z), \, (1 - g_{0})^{1/2} \big] }
     \bnorm{ (1 - g_{0})^{1/2} } \\
& \hspace*{3em} {\mbox{}}
 < \dt_1 + \dt_3
  \leq 2 \dt_1
  \leq \dt_0.
\end{align*}

For $j = 1, 2, \ldots, m$,
we want to
apply the choice of $\dt_0$ using Lemma~\ref{L-phZcjCom}
with $c_j$ in place of~$c$,
with $\ph (z)$ in place of~$z$,
and with $\ph_0 (z)$ in place of~$z_0$.
We have just verified condition~(\ref{L_5715_CommEst_Dist})
of Lemma~\ref{L-phZcjCom}.
Conditions (\ref{L_5715_CommEst_Order}),
(\ref{L_5715_CommEst_RelUnit}),
(\ref{L_5715_CommEst_Norm}),
(\ref{L_5715_CommEst_Commg2c}),
and~(\ref{L_5715_CommEst_CommG0}) of Lemma~\ref{L-phZcjCom}
follow from the requirements $\| a_j \| \leq 1$,
$\| z \| \leq 1$,
$\dt_1 \leq \dt_0$,
the choice of~$S$,
and (\ref{tZabsGoesUp_1}),
(\ref{tZabsGoesUp_2}),
and (\ref{TZA_2}) above.
It remains to verify condition~(\ref{L_5715_CommEst_Commgc02})
of Lemma~\ref{L-phZcjCom}.
Using (\ref{tZabsGoesUp_1}),
(\ref{tZabsGoesUp_3}),
and (\ref{tZabsGoesUp_7}) above,
for $k = 0, 1$ we get
\[
\| [c_j, g_k] \|
 \leq 2 \| c_j - a_j \| \| g_k \| + \| [a_j, g_k] \|
 < 2 \dt_1 + \dt_1
 \leq \dt_0,
\]
as desired.
The choice of $\dt_0$ using Lemma~\ref{L-phZcjCom}
therefore implies $\norm{ [\ph (z), \, c_{j}] } < \frac{\ep}{3}$.
We now estimate
\[
\norm{ [\ph (z), \, a_{j}] }
 \leq 2 \norm{a_{j} - c_{j}} \norm{ \ph (z) }
    + \norm{ [\ph (z), \, c_{j}] }
  < 2 \dt_1 + \frac{\ep}{3}
  \leq \ep.
\]
We have verified condition~(\ref{D_TracialZAbs_AppCm})
in Definition~\ref{D_TracialZAbs}.

It remains to show that $1_{A} - \ph (1) \cuntz_{A} x$.
Using (\ref{Eq_5715_Domg0b}) at the first step,
using (\ref{tZabsGoesUp_5}) and (\ref{TZA_1}) at the second step,
using (\ref{Eq_5715_Getb}) at the third step,
and using (\ref{Eq_5715_Getx1x2}) at the fourth step,
we get
\[
1_{A} - \ph (1)
 \cuntz_{B} g_0 \oplus [1 - \ph_0 (1)]
 \cuntz_{A} x_1 \oplus b
 \cuntz_{A} x_1 \oplus x_2
 \cuntz_{A} x,
\]
as required.
\end{proof}

\begin{thm}\label{T_5715_ZabsGoesUp}
Let $A$ be a simple separable infinite dimensional nuclear \uca,
and let $B$ be a centrally large subalgebra of~$A$.
If $\JS \otimes B \cong B$ then $\JS \otimes A \cong A$.
\end{thm}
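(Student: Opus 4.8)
The plan is to sandwich the statement between the two implications of the Hirshberg--Orovitz equivalence of $\JS$-stability and tracial $\JS$-absorption (\cite{HirOr2013}), using Theorem~\ref{tZabsGoesUp} to transfer the intermediate property up from $B$ to~$A$. The crucial observation is that of the two implications, only one requires nuclearity, and that one is applied to~$A$, for which nuclearity is a hypothesis; the other is applied to~$B$, whose nuclearity we do not know.

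First I would check that $B$ satisfies the standing hypotheses needed below. Since $B$ is centrally large in~$A$, it is a unital subalgebra, so $1_A \in B$ and $B$ is unital; as a subalgebra of the separable algebra~$A$ it is separable; and by Proposition~5.2 of~\cite{PhLarge} it is simple. From $\JS \otimes B \cong B$ we see that $B$ is $\JS$-stable, and since $\JS$ is infinite dimensional and $B \neq 0$, the algebra $\JS \otimes B \cong B$ is infinite dimensional; in particular $B \not\cong \C$.

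Next I would invoke the elementary direction of~\cite{HirOr2013}: a simple separable unital $\JS$-stable C*-algebra is tracially $\JS$-absorbing. This implication does \emph{not} require nuclearity, being obtained by transporting order zero maps $M_n \to \JS$ coming from the $\JS$-stable structure together with strict comparison in~$\JS$. Applied to~$B$, it shows that $B$ is tracially $\JS$-absorbing. Now Theorem~\ref{tZabsGoesUp} applies directly: $A$ is a simple infinite dimensional unital C*-algebra, $B$ is a centrally large subalgebra of~$A$, and $B$ is tracially $\JS$-absorbing, so $A$ is tracially $\JS$-absorbing.

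Finally, $A$ is simple, separable, unital, infinite dimensional, and, by hypothesis, nuclear. This is exactly the setting in which the deep direction of~\cite{HirOr2013} deduces $\JS$-stability from tracial $\JS$-absorption, giving $\JS \otimes A \cong A$. I expect the main point to be careful about is precisely the application of the first (easy) implication to~$B$: one must confirm that this direction is genuinely nuclearity-free, since nuclearity of $B$ has not been established, whereas the nuclearity used in the last step is available for~$A$.
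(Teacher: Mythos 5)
Your proposal is correct and follows exactly the paper's argument: apply the nuclearity-free direction of Hirshberg--Orovitz (their Proposition~2.2) to $B$ to get tracial $\JS$-absorption, transfer it to $A$ via Theorem~\ref{tZabsGoesUp}, and then apply their Theorem~4.1 to the nuclear algebra~$A$. Your remark that nuclearity is only needed in the step applied to~$A$ is precisely the point the paper's proof relies on.
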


\begin{proof}
Proposition~2.2 of~\cite{HirOr2013}
implies that $B$ is tracially $\JS$-absorbing.
So $A$ is tracially $\JS$-absorbing
by Theorem~\ref{tZabsGoesUp}.
Since $A$ is nuclear,
Theorem~4.1 of~\cite{HirOr2013}
implies that $\JS \otimes A \cong A$.
\end{proof}

\begin{thm}\label{T_6208_ZDown}
Let $A$ be a stably finite simple infinite dimensional \uca,
and let $B$ be a centrally large subalgebra of~$A$.
If $A$ is tracially $\JS$-absorbing,
then $B$ is tracially $\JS$-absorbing.
\end{thm}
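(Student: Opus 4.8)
The plan is to verify Definition~\ref{D_TracialZAbs} for~$B$ by importing a completely positive contractive order zero map from~$A$ and cutting it down into~$B$ using central largeness, thereby reversing the logic of Theorem~\ref{tZabsGoesUp}. Fix $\ep > 0$, a finite set $F = \{ a_1, \ldots, a_m \} \subset B$ with $\| a_j \| \leq 1$, an integer~$n$, and $x \in B_{+} \setminus \{ 0 \}$ with $\| x \| = 1$. First I would split $x$ inside~$B$, by a standard orthogonal splitting (Lemma~2.4 of~\cite{PhLarge}), into three mutually orthogonal equivalent pieces $x_1, x_2, x_3 \in B_{+} \setminus \{ 0 \}$ whose sum lies in $\overline{x B x}$; discarding $x_3$ leaves room so that $d_\sigma (x_1 + x_2) \leq \tfrac{2}{3} d_\sigma (x) < d_\sigma (x)$ for every quasitrace $\sigma$ on~$B$ (using that $B$ is simple, so $d_\sigma (x) > 0$). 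Then, applying tracial $\JS$-absorption of~$A$ to $F$ (viewed in~$A$) with target $x_2 \in A_{+}$, I obtain a completely positive contractive order zero map $\psi \colon M_n \to A$ that approximately commutes with each $a_j$ and satisfies $1 - \psi (1) \cuntz_A x_2$.

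The essential ordering is to invoke central largeness only after $\psi$ is in hand. I would apply Lemma~\ref{L_9430_LgTower} (with $N = 0$) to the enlarged set $\{ a_j \} \cup \{ \psi (e_{j, k}) \}$, taking $x_1$ in the role of~$y$, to produce $g \in B$ with $0 \leq g \leq 1$ such that $g$ approximately commutes with each $a_j$ and each $\psi (e_{j, k})$, such that $(1 - g)$ times each of these elements lies approximately in~$B$, and such that $g \cuntz_B x_1$. Since $g$ approximately commutes with $\psi (e_{j, k})$, we get $\psi (e_{j, k}) (1 - g) \approx (1 - g) \psi (e_{j, k}) \in B$, so the hypotheses of Lemma~\ref{L_5714_MainPerturb} hold with $\ph_0 = \psi$ and $1 - g \in B$ in the role of~$x$ (both ``$A$'' and the containing algebra being~$A$, with $B$ unitally contained). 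That lemma then yields a completely positive contractive order zero map $\ph \colon M_n \to B$ with $\| \ph (z) - \psi (z) (1 - g) \|$ as small as desired for $\| z \| \leq 1$, together with $1 - \ph (1) \cuntz_A g \oplus [1 - \psi (1)]$.

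Condition~(\ref{D_TracialZAbs_AppCm}) is then routine and, unlike in the going-up direction, does not require Lemma~\ref{L-phZcjCom}: writing $[ \psi (z) (1 - g), \, a_j ] = \psi (z) [ 1 - g, \, a_j ] + [ \psi (z), \, a_j ] (1 - g)$ and using that both $\psi (z)$ and $1 - g$ approximately commute with $a_j$, a direct triangle-inequality estimate gives $\| [ \ph (z), \, a_j ] \| < \ep$ once the tolerances in the two applications above are chosen small enough. For the Cuntz condition I would chain, in~$A$,
\[
1 - \ph (1) \cuntz_A g \oplus [1 - \psi (1)] \cuntz_A x_1 \oplus x_2 \sim_A x_1 + x_2 \cuntz_A x,
\]
using $g \cuntz_B x_1 \Rightarrow g \cuntz_A x_1$, the comparison $1 - \psi (1) \cuntz_A x_2$, and orthogonality of $x_1$ and~$x_2$.

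The one genuinely nontrivial point, which I expect to be the main obstacle, is that the perturbation lemma delivers a comparison only in the ambient algebra~$A$ (it unavoidably involves $\psi (1) \in A$), whereas Definition~\ref{D_TracialZAbs} for~$B$ demands $1 - \ph (1) \cuntz_B x$. To transfer the comparison downward I would use that $A$, being tracially $\JS$-absorbing, has strict comparison (Theorem~3.3 of~\cite{HirOr2013}), hence $\rc (A) = 0$, so by Theorem~6.14 of~\cite{PhLarge} also $\rc (B) = 0$ and $B$ has strict comparison. Since $1 - \ph (1), x \in B_{+}$ and the dimension function $d_\sigma (b) = \lim_n \sigma (b^{1/n})$ of a quasitrace on~$B$ uses only powers of~$b$ (which stay in~$B$), the bijection between the quasitraces of $B$ and $A$ (Proposition~6.9 of~\cite{PhLarge}) lets me read the chain above as $d_\sigma (1 - \ph (1)) \leq d_\sigma (x_1 + x_2) < d_\sigma (x)$ for every quasitrace $\sigma$ on~$B$; strict comparison in~$B$ then gives $1 - \ph (1) \cuntz_B x$, completing the verification. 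Alternatively one could invoke the agreement of the purely positive Cuntz semigroups of $A$ and~$B$ (Theorem~6.8 of~\cite{PhLarge}), but the radius-of-comparison route avoids any case distinction over projections.
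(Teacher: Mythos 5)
Your construction of the order zero map into~$B$ coincides in substance with the paper's: both proofs take an order zero map $\ph_0 \colon M_n \to A$ from tracial $\JS$-absorption of~$A$, use central largeness to produce $g \in B$ approximately commuting with $F$ and with the $\ph_0 (e_{j, k})$ and with $\ph_0 (e_{j, k}) (1 - g)$ approximately in~$B$, and then feed $1 - g$ into Lemma~\ref{L_5714_MainPerturb}; the commutator estimate is the same direct computation, and indeed Lemma~\ref{L-phZcjCom} is not needed in this direction. Where you genuinely diverge is the final step, transferring $1 - \ph (1) \cuntz_A x_1 \oplus x_2$ to a comparison in~$B$. The paper reserves the third orthogonal piece for exactly this purpose: it arranges $1 - \ph_0 (1) \cuntz_A x_1$ and $g \cuntz_B x_3$, so that $1 - \ph (1) \cuntz_A x_1 + x_3$, and then applies Lemma~6.5 of~\cite{PhLarge} (the Cuntz-comparison transfer lemma for stably large subalgebras) with $x_2$ as the leftover room; this stays inside the large-subalgebra machinery and needs no comparison theory. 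Your route through strict comparison --- Theorem~3.3 of~\cite{HirOr2013} for~$A$, Theorem~6.14 of~\cite{PhLarge} to get $\rc (B) = 0$, and the quasitrace bijection of Proposition~6.9 of~\cite{PhLarge} --- also works, but two points must be made explicit: (i)~both Theorem~6.14 and Lemma~6.5 require $B$ to be \emph{stably} large, which is where the stable finiteness hypothesis enters, via Corollary~3.8 of~\cite{ArPh} (the paper's remark after the theorem statement flags exactly this); and (ii)~$\rc (B) = 0$ yields $a \cuntz_B b$ only from a \emph{uniform} gap $d_\sm (a) + r < d_\sm (b)$, so you need to note that $d_\sm (1 - \ph (1)) \leq \tfrac{2}{3} d_\sm (x)$, together with compactness of the quasitrace space, lower semicontinuity of $\sm \mapsto d_\sm (x)$, and simplicity of~$B$, gives $\inf_\sm \big( d_\sm (x) - d_\sm (1 - \ph (1)) \big) > 0$. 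With those two points supplied your argument is complete; the paper's version is shorter because Lemma~6.5 of~\cite{PhLarge} packages the downward transfer in a single step.
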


As will be clear from the proof,
we can drop the stable finiteness requirement
if we require that $B$ be stably centrally large in~$A$.

\begin{proof}[Proof of Theorem~\ref{T_6208_ZDown}]
We verify the conditions in Definition~\ref{D_TracialZAbs}.
So let $\ep > 0$, let $F \subset B$ be a finite set,
let $x \in B_{+} \setminus \{ 0 \}$, and let $n \in \N$.
Without loss of generality,
we may assume that
$\norm{a} \leq 1$ for all $a \in F$.
By Lemma~2.4 of~\cite{PhLarge},
there exist $x_{1}, x_{2}, x_3 \in B_{+} \setminus \set{0}$ such that
\begin{equation}\label{Eq_6208_Getx1x2x3}
x_{1} \sim x_{2} \sim x_3,
\,\,\,\,\,\,
x_{1} x_{2} = x_1 x_3 = x_2 x_3 = 0,
\andeqn
x_{1} + x_{2} + x_3 \in {\overline{x B x}}.
\end{equation}
Apply Lemma~\ref{L_5714_MainPerturb}
with $n$ as given and
with $\frac{\ep}{4}$ in place of~$\ep$,
getting $\dt_0 > 0$.
Set $\dt = \min \left( \frac{\ep}{4}, \dt_0 \right)$.

Since $A$ is tracially $\JS$-absorbing,
there is a completely positive contractive order zero map
$\ph_0 \colon M_{n} \to A$ such that:
\begin{enumerate}
\item\label{D_6208_TracialZAbs_Small}
$1 - \ph_0 (1) \cuntz_{A} x_1$.
\item\label{D_6208_TracialZAbs_AppCm}
For $z \in M_{n}$ with $\norm{z} \leq 1$ and $a \in F$,
we have $\norm{\ph_0 (z) a - a \ph_0 (z) } < \dt$.
\setcounter{TmpEnumi}{\value{enumi}}
\end{enumerate}
Since $B$ is centrally large in~$A$,
there exist
$y_{j, k} \in A$ for $j, k = 1, 2, \ldots, n$
and $g \in B$ such that:
\begin{enumerate}
\setcounter{enumi}{\value{TmpEnumi}}
\item\label{Rel_6208_LargeSubalg_1}
$0 \leq g \leq 1$.
\item\label{Rel_6208_LargeSubalg_2}
$\norm{y_{j, k} - \ph_0 (e_{j, k}) } < \dt$
for $j, k = 1, 2, \ldots, n$.
\item\label{Rel_6208_LargeSubalg_3}
$(1 - g) y_{j, k}^* \in B$ for $j, k = 1, 2, \ldots, n$.
\item\label{Rel_6208_LargeSubalg_4}
$g \cuntz_{B} x_3$ and $g \cuntz_{A} 1$.
\item\label{Rel_6208_LargeSubalg_6}
$\norm{ g \ph_0(e_{j, k}) - \ph_0 (e_{j, k}) g } < \dt$ for $j, k = 1, 2,
\ldots, n$.
\item\label{Rel_6208_LargeSubalg_6forF}
$\norm{ g a - a g } < \dt$ for $a \in F$.
\setcounter{TmpEnumi}{\value{enumi}}
\end{enumerate}
By~(\ref{Rel_6208_LargeSubalg_3}),
for $j, k = 1, 2, \ldots, n$
we have $y_{j, k} (1 - g) \in B$,
so $\dist \big( \ph_0 (e_{j, k}) (1 - g), \, B \big) < \dt$.
Since $\dt \leq \dt_0$,
we can apply the choice of $\dt_0$
using Lemma~\ref{L_5714_MainPerturb},
taking $A$, $B$, and $\ph_0$ there to be as given
and with $x = 1 - g$.
We get a completely positive contractive order zero map
$\ph \colon M_{n} \to B$ such that:
\begin{enumerate}
\setcounter{enumi}{\value{TmpEnumi}}
\item\label{Rel_6208_MainPerturb_Dist}
$\norm{\ph (z) - \ph_0 (z) (1 - g)} < \frac{\ep}{4}$
for all $z \in M_{n}$ with
$\norm{z} \leq 1$.
\item\label{Rel_6208_MainPerturb_Cuntz}
$1 - \ph (1) \cuntz_A g \oplus [1 - \ph_0 (1)]$.
\end{enumerate}

{}From (\ref{Rel_6208_MainPerturb_Cuntz}),
(\ref{D_6208_TracialZAbs_Small}),
and~(\ref{Rel_6208_LargeSubalg_4}),
we get $1 - \ph (1) \cuntz_A x_3 \oplus x_1 \sim_A x_1 + x_3$.
Corollary~3.8 of~\cite{ArPh}
implies that $B$ is stably centrally large in~$A$.
In particular, $B$ is stably large in~$A$.
Using~(\ref{Eq_6208_Getx1x2x3}),
apply Lemma~6.5 of~\cite{PhLarge}
with $1 - \ph (1)$ in place of~$a$,
with $x$ in place of~$b$,
with $x_1 + x_3$ in place of~$c$,
and with $x_2$ in place of~$x$,
to get $1 - \ph (1) \cuntz_{B} x$.
This is part~(\ref{D_TracialZAbs_Small})
of Definition~\ref{D_TracialZAbs}.

For part~(\ref{D_TracialZAbs_AppCm})
of Definition~\ref{D_TracialZAbs},
let $a \in F$ and let $z \in M_n$ satisfy $\| z \| \leq 1$.
Then,
using (\ref{Rel_6208_MainPerturb_Dist}) at the second step,
(\ref{Rel_6208_LargeSubalg_6forF})
and (\ref{D_6208_TracialZAbs_AppCm}) at the third step,
and $\dt \leq \frac{\ep}{4}$ at the fourth step,
\begin{align*}
\norm{ [\ph (z), a] }
& \leq 2 \norm{\ph (z) - \ph_0 (z) (1 - g)}
  + \norm{ [\ph_0 (z) (1 - g), \, a] }
\\
& < \frac{\ep}{2}
    + \norm{ \ph_0 (z) } \norm{ [ 1 - g, \, a] }
    + \norm{ [\ph_0 (z), \, a] } \norm{ 1 - g }
\\
& < \frac{\ep}{2} + \dt + \dt
  \leq \ep.
\end{align*}
This completes the proof.
\end{proof}

\begin{cor}\label{C_6208_ZabsGoesDown}
Let $A$ be a simple separable infinite dimensional \uca,
and let $B$ be a centrally large nuclear subalgebra of~$A$.
If $\JS \otimes A \cong A$ then $\JS \otimes B \cong B$.
\end{cor}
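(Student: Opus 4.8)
The plan is to produce tracial $\JS$-absorption for~$B$ and then promote it to $\JS$-stability using nuclearity. First I would apply Proposition~2.2 of~\cite{HirOr2013} to rewrite the hypothesis $\JS \otimes A \cong A$ as the statement that $A$ is tracially $\JS$-absorbing. The natural route is then to transfer this property down to~$B$ by Theorem~\ref{T_6208_ZDown}, and finally to apply Theorem~4.1 of~\cite{HirOr2013}, using that $B$ is nuclear and separable (separability is automatic from $B \subset A$), to obtain $\JS \otimes B \cong B$.

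The obstacle is that Theorem~\ref{T_6208_ZDown} assumes $A$ is stably finite, which is not hypothesized here. To get around this I would split into two cases via the dichotomy noted after Theorem~\ref{tZabsGoesUp}: since $A$ is tracially $\JS$-absorbing, Theorem~3.3 of~\cite{HirOr2013} yields strict comparison, so $A$ either carries a normalized quasitrace, and is thus stably finite, or is purely infinite. When $A$ is stably finite, every hypothesis of Theorem~\ref{T_6208_ZDown} holds and the plan of the first paragraph runs without change.

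The purely infinite case is where I expect the genuine difficulty, and there I would prove directly that $B$ itself is purely infinite. Since $B$ is centrally large it is large, and hence stably large in~$A$ (large subalgebras of simple separable unital C*-algebras are stably large), so Theorem~6.8 of~\cite{PhLarge} gives an isomorphism of the purely positive parts of the Cuntz semigroups of $B$ and~$A$. As $A$ is purely infinite and simple, that part is a single class $\infty$ with $\infty + \infty = \infty$; thus every nonzero purely positive element of~$B$ has one Cuntz class $\infty_B$, with $\infty_B + \infty_B = \infty_B$. In particular $B$ is infinite dimensional, so every nonzero hereditary subalgebra of the simple algebra~$B$ is infinite dimensional and hence contains a purely positive element. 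Using simplicity of~$B$ (Proposition~5.2 of~\cite{PhLarge}), for any purely positive $e \in B_+$ there is $n \in \N$ with $1_B$ Cuntz subequivalent in~$B$ to a direct sum of $n$ copies of~$e$; that sum is Cuntz equivalent to~$e$ because $\infty_B + \infty_B = \infty_B$, so $1_B \cuntz_B e$. Given any nonzero $a \in B_+$, choosing such an $e$ inside $\overline{a B a}$ gives $e \cuntz_B a$, whence $1_B \cuntz_B e \cuntz_B a$. Therefore $1_B \cuntz_B a$ for every nonzero $a \in B_+$, i.e.\ $B$ is purely infinite.

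In this case $B$ is a simple, separable, unital, nuclear, purely infinite C*-algebra, that is, a unital Kirchberg algebra; such an algebra absorbs $\mathcal{O}_\infty$, and since $\mathcal{O}_\infty \otimes \JS \cong \mathcal{O}_\infty$ it is $\JS$-stable, so $\JS \otimes B \cong B$. Combining the two cases finishes the proof.
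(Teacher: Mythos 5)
Your first paragraph is precisely the paper's proof: the authors say only that the argument is the same as that of Theorem~\ref{T_5715_ZabsGoesUp} with Theorem~\ref{T_6208_ZDown} in place of Theorem~\ref{tZabsGoesUp}, i.e., Proposition~2.2 of \cite{HirOr2013} converts $\JS \otimes A \cong A$ into tracial $\JS$-absorption of~$A$, Theorem~\ref{T_6208_ZDown} passes this down to~$B$, and Theorem~4.1 of \cite{HirOr2013} (with $B$ separable and nuclear) converts it back. You are also right to worry about the stable finiteness hypothesis of Theorem~\ref{T_6208_ZDown}, which Corollary~\ref{C_6208_ZabsGoesDown} does not assume and the paper's proof does not mention; the dichotomy you invoke (stably finite or purely infinite) is the correct way to organize a repair, and in the stably finite case your argument is exactly the paper's.

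The purely infinite branch, however, does not work as written. First, the assertion that a large subalgebra of a simple separable \uca{} is automatically stably large is not something you can cite: the mechanism in these papers for upgrading (centrally) large to stably (centrally) large is Corollary~3.8 of \cite{ArPh}, which requires $A$ to be stably finite --- this is exactly why Theorem~\ref{T_6208_ZDown} carries that hypothesis, and the remark following that theorem would be vacuous if your claim held in general. Second, even granting stably large, Theorem~6.8 of \cite{PhLarge} concerns the \emph{purely positive} classes, meaning Cuntz classes not represented by a projection; when $A$ is purely infinite and simple, every nonzero positive element is Cuntz equivalent to~$1_A$, so this part of $\Cu (A)$ is empty rather than a one-element semigroup $\{ \infty \}$, and your claim that every nonzero hereditary subalgebra of~$B$ contains a purely positive element is false precisely when $B$ is purely infinite --- which is the conclusion you are driving toward, so the argument is internally inconsistent. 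The cleaner repair is to deduce pure infiniteness of~$B$ directly from that of~$A$ via Proposition~6.17 of \cite{PhLarge}, the result the paper itself quotes after Theorem~\ref{tZabsGoesUp} for the relationship between pure infiniteness of $B$ and of~$A$; once $B$ is known to be purely infinite, your Kirchberg-algebra/$\mathcal{O}_\infty$-absorption argument correctly yields $\JS \otimes B \cong B$.
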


\begin{proof}
The proof is essentially the same as that of
Theorem~\ref{T_5715_ZabsGoesUp},
using Theorem~\ref{T_6208_ZDown}
in place of Theorem~\ref{tZabsGoesUp}.
\end{proof}

Wei Sun~(\cite{Su}) has studied a class of
of ``generalized higher dimensional noncommutative tori''.
They are defined in terms of generators and relations,
involving $n$ commuting unitaries
(which generate a copy of $C ((S^1)^n)$),
a sequence $\te = (\te_{1}, \te_{2}, \ldots, \te_{n}) \in \R^{n}$,
a further (nonunitary) element~$b$
which satisfies commutation relations with the given unitaries
involving~$\te$,
and a nonnegative function $\gm \in C ((S^1)^n)$
related to the extent to which $b$ fails to be unitary.
The resulting algebra is called $A_{\te, \gm}$.
Let $h \colon (S^{1})^{n} \to (S^{1})^{n}$
be the \hme{}
given by
\[
h (z_{1}, z_{2}, \ldots, z_{n}) =
\big( e^{2 \pi i \te_{1}} z_{1}, \, e^{2 \pi i \te_{2}} z_{2}, \,
   \ldots, \, e^{2 \pi i \te_{n}} z_{n} \big)
\]
for $z_{1}, z_{2}, \ldots, z_{n} \in S^1$.
When $\te_{1}, \te_{2}, \ldots, \te_{n}$
are rationally independent,
so that $h$ is minimal,
and when the zero set $Y$ of $\gm$
satisfies $h^n (Y) \cap Y = \varnothing$
for all $n \in \Z \setminus \{ 0 \}$,
Wei Sun has shown that
$A_{\te, \gm}$ is isomorphic to the subalgebra
$C^{*} \big( \Z, (S^{1})^{n}, h \big)_{Y}
  \subset C^{*} \big( \Z, (S^{1})^{n}, h \big)$
as in Definition~7.3
of~\cite{PhLarge}.
Accordingly,
it is useful to obtain information
about $C^{*} \big( \Z, (S^{1})^{n}, h \big)_{Y}$
from knowledge of the structure of
$C^{*} \big( \Z, (S^{1})^{n}, h \big)$.
The algebra $C^{*} \big( \Z, (S^{1})^{n}, h \big)$
is a special example of a higher dimensional noncommutative torus.

\begin{exa}\label{E_6804_WeiSunEx}
Let the notation be as in the preceding discussion,
including the assumptions that
$\te_{1}, \te_{2}, \ldots, \te_{n}$
are rationally independent
and that $Y \subset (S^{1})^{n}$
is a closed subset satisfying $h^n (Y) \cap Y = \varnothing$
for all $n \in \Z \setminus \{ 0 \}$.
Abbreviate $C^{*} \big( \Z, (S^{1})^{n}, h \big)$ to~$A$
and $C^{*} \big( \Z, (S^{1})^{n}, h \big)_{Y}$ to~$A_{Y}$.
It is known
that $A$ is $\JS$-stable
(for example, see Corollary~3.4 of~\cite{TW}),
that $A$ has a unique tracial state~$\ta$
(Lemma 3.2(i) of~\cite{Sl}),
and that $\ta_* (K_0 (A))$ is dense in~$\R$
(this is true of its subalgebra
$A_{\te_1}$,
for which the range of the trace on K-theory
is $\Z + \te_1 \Z$ by Proposition~1.4 of~\cite{Rf0}).
Theorem~7.10 of~\cite{PhLarge} and
Theorem~4.6 of~\cite{ArPh}
imply that $A_Y$ is a centrally large subalgebra of~$A$.
So Corollary~\ref{C_6208_ZabsGoesDown} implies that $A_Y$
is $\JS$-stable.
It now follows from Theorem~6.7 of~\cite{Rrd}
that $A_Y$ has stable rank one.
Theorem~6.2 of~\cite{PhLarge}
implies that $A_Y$ has a unique tracial state,
namely $\sm = \ta |_{A_Y}$.
Suppose further that $K^1 (Y) = 0$.
It then follows from Theorem~2.4 of~\cite{Pt4}
and the discussion
after Example~2.6 of~\cite{Pt4}
that the map from $K_0 (A_Y)$ to $K_0 (A)$
is surjective.
So $\sm_* (K_0 (A)_Y)$ is dense in~$\R$.
Now Corollary~7.3 of~\cite{Rrd} implies that,
in this case, $A_Y$ has real rank zero.
In particular,
under these hypotheses,
the algebra $A_{\te, \gm}$ of~\cite{Su}
is $\JS$-stable,
has stable rank one,
and, if $K^1 (Y) = 0$, has real rank zero.
\end{exa}

The hypothesis $K^1 (Y) = 0$
is stronger than needed for the conclusion
that $A_Y$ has real rank zero.
Indeed, under our other hypotheses,
this may always be true.
We leave such questions to~\cite{Su}.

There are other ways to get these results.
For example,
$C^{*} \big( \Z, (S^{1})^{n}, h \big)_{Y}$
is a simple direct limit,
with no dimension growth,
of recursive subhomogeneous \ca{s}.

In Theorem~\ref{T_5715_ZabsGoesUp},
it is not possible to replace $\JS$
with a general strongly selfabsorbing \ca~$D$.

\begin{exa}\label{E_5715_UAbsNotUp}
Let $D$ be the $2^{\infty}$~UHF algebra.
Let $X$ be the Cantor set,
and let $h \colon X \to X$
be the $2$-odometer.
(See page~332 of~\cite{Pt1},
or Section VIII.4 of~\cite{Dv}.)
Set $A = C^* (\Z, X, h)$,
fix $y \in X$,
and take $B = C^* (\Z, X, h)_{\set{y}}$,
as in Definition 7.3 of~\cite{PhLarge}.
(This is the algebra called $A_{ \set{h (y)}}$
in Theorem~3.3 of~\cite{Pt1}.
One uses $h (y)$ rather than $y$ because
of the difference between the conventions used in \cite{Pt1}
and~\cite{PhLarge}.)
Then $K_0 (A) \cong \Z \big[ \frac{1}{2} \big]$,
as explained on page~332 of~\cite{Pt1}.
The inclusion of $B$ in $A$ is an isomorphism
on~$K_0$,
by Theorem~4.1 of~\cite{Pt1},
and $B$ is an AF~algebra
by Theorem~3.3 of~\cite{Pt1},
so $B$ is the $2^{\infty}$~UHF algebra.
(This specific case is Example VIII.6.3 in~\cite{Dv}.)
In particular,
$B$ is $D$-absorbing.
However,
the Pimsner-Voiculescu exact sequence
(see Theorem VIII.5.1 of~\cite{Dv})
easily shows that $K_1 (A) \cong \Z$.
The K\"{u}nneth Theorem
(Theorem~4.1 of~\cite{Sc2})
therefore implies that $A$ is not $D$-absorbing.
The subalgebra $B$ is centrally large in~$A$
by Theorem~7.10 of~\cite{PhLarge}
and Theorem~4.6 of~\cite{ArPh}.
\end{exa}

\renewcommand{\bibname}{\textsc{REFERENCES}}

\end{document}